\newcommand{\C}{\mathbb C}
\newcommand{\R}{\mathbb R}
\newcommand{\N}{\mathbb N}
\newtheorem{theorem}{Theorem}[section]
\newtheorem{lemma}[theorem]{Lemma}
\newtheorem{prop}[theorem]{Proposition}
\newtheorem{defn}[theorem]{Definition}
\newtheorem{rem}[theorem]{Remark}
\begin{document}

\title {Complex associated to some systems of PDE}

\author{ Pierre Bonneau*  and Emmanuel Mazzilli**}

\maketitle

\begin{abstract}  In \cite{WW1} and \cite{WW2}, the author constructed the Complex associated to $1$-regular functions. This complex is the equivalent of Dolbeault's complex for holomorphic functions if we replace the Cauchy-Riemann equations by the Cauchy-Fueter equations. In this paper, using the Cartan theory of linear Pfaffian system, we give a direct construction for the Cauchy-Fueter complex, at least in $\R^8$. Moreover, we give a sufficient condition in terms of Cartan's theory, to ensure that a complex associated to a linear PDE system with constant coefficients of order one, contains only operators of order one. In fact, the Cauchy-Fueter equation in $\R^8$ is an illuminating example for which this condition is not satisfied. \end{abstract}

\section{introduction}
\bigskip

The aim of this paper is to give a complete construction of the complex associated to the 1-regular functions. This complex was first obtained by Wang Wei in \cite{WW1} and \cite{WW2} using classical theory of Leray's spectral sequences. Here we give a more elementary construction using the Cartan theory of involution for linear Pfaffian exterior differential system. For simplicity, we restrict ourselves to germs of $1$-regular functions defined in the neighborood of a point $z\in\C^8$ with value in $\C^2$. Using the notation of \cite{WW2}, the coordinates on $\C^8$ will be  $(z^{ij})$ with $0\leq i\leq 3$ and $0\leq j\leq 1$. The $1$-regular functions are the solutions of the following PDE system 
\begin{equation}
{\partial \phi^{0}\over z^{i0}}+{\partial\phi^{1}\over z^{i1}}=0,
\end{equation}
for all $i$. Nevertheless the previous homogeneous system is overdetermined, $\phi$ identically equal zero, is not the only solution (it will be clear in the following). We will explain in more details the  construction of the first step in the $1$-regular complex. We have to solve the non homogeneous system associated to the first one 
\begin{equation}{\partial \phi^{0}\over z^{i0}}+{\partial\phi^{1}\over z^{i1}}=\Phi^{i}\ \ ,(J).
\end{equation}
The system is overdetermined and so it has no solution for all $(\Phi^i)$. The first difficulty is to find the constraint of integration; if we made an analogy with the Dolbeault complex for $\bar\partial$ and we use the language of differential form, the equivalent of the system $J$ in this setting is the system 
\begin{equation}\bar\partial f=u,
\end{equation}
with $u$ a $1$-form in $\C^8$ and a function $f$ in $\C^8$ with values in $\C$. It is well known that we have two constraints to solve these equations : $u$ has to be a $(0,1)$-form and $\bar\partial u=0$. In our case, it is a little more difficult to find the constraints and the solutions. The system $J$ can be viewed as a linear pfaffian exterior differential system so we can apply the Cartan's theory to find solutions of it. More precisely, if we introduce $p^{a}$, $p^{a}_{ij}$ as free variables for $\Phi^{a}$, ${\partial\phi^{a}\over z^{ij}}$, the structure forms of the  Pfaffian system associated to $J$ are, 
$$dp^{a}-p^{a}_{ij}dz^{ij},$$ 
on the manifold $M$ defined by the equations: $$p^{0}_{i0}+p^{1}_{i1}=\Phi^{i},\ \ (J)$$ 
with independence condition $\wedge_{i,j}dz^{ij}\not = 0$ on $M$. Roughly speaking the Cartan theory implies existence of solution of $J$ passing through all points of $M$ if the system is in involution. For Pfaffian linear system the involution is equivalent of the two following things : the torsion is zero for all points in $M$ and the tableau associated to $J$, $A_x$, must be involutive for all $x$ in $M$ (see \cite{BCGGG} for the definition of $A_x$). The torsion is the obstruction of the existence of solutions of the first prolongation system of $J$, denoted by $J^{1}$, in sense of jets. With the variables $p^{a}_{ijkl}$ for ${\partial^{2} \phi^{a}\over \partial z^{ij}\partial z^{kl}}$, the first prolongation system $J^{1}$ is the linear Pfaffian system with structure forms
$$dp^{a}-p^{a}_{ij}dz^{ij},$$
$$dp^{a}_{ij}-p^{a}_{ijkl}dz^{kl},$$
on the manifold $M^{1}$ defined by the equations:
$$p^{0}_{i0}+p^{1}_{i1}=\Phi^{i},$$
$$p^{0}_{i0kl}+p^{1}_{i1kl}={\partial\Phi^{i}\over \partial z^{kl}}.$$ Clearly the two systems have exactly the same solutions. Finally the torsion is the compatibility conditions for which the last two linear equations have solutions with $p^{a}_{ijkl}$ symmetric by change of pairs $ij$ and $kl$. We can define by induction the prolongation $J^{q}$ of $J$ on the manifold $M^{q}$ for all $q\in \N$. Using the proposition 3.9 of \cite{BCGGG}, there exists $k_0$ such that for all $k\geq k_0$ the tableau $A^{q}$ associated to $J^{q}$ is in involution. To obtain a solution of $J$, it is sufficient to compute the torsion of all $J^{q}$, which is done precisely in the  section $2$ for more general system than the Cauchy-Fueter equations  with intrinsic definition of the torsion. In section $3$, we calculate the torsion of the PDE system induced by the torsion of $J$ by means of Spencer's cohomology. This torsion will give the second linear operator in the complex associated to the $1$-regular functions, and so on, until we obtain a system without torsion (see section $4$). Furthermore, in the section $4$, we give a sufficient condition for a linear PDE system with constant coefficients of order one, to have only first order operators in the associated complex, in terms of "tableau"'s involutivity, which seems new.

\section{Some systems of PDE}
Let us consider the system of partial differential equations 
\begin{equation}\label{W2}
\frac{\partial \phi^{0}}{\partial z^{j_{0}0}}+\frac{\partial \phi^{1}}{\partial z^{j_{1}1}}=\varphi^{j_{0},j_{1}},\quad with \;\;(j_{0},j_{1})\in \mathfrak{I}\subset \{1,...,n\}\times \{1,...,m\},
\end{equation}
where the unknown functions $\phi^{0},\;\phi^{1}$ are complex functions defined in an open set $\Omega$ of $\C^{n+m}=\C^{n}\times\C^{m}$ with coordonates $(z^{10},\;z^{20},...,z^{n0})\in \C^{n},$ and $(z^{11},\;z^{21},...,\;z^{m1})\in \C^{m},$ and the functions $\varphi^{j_{0},j_{1}}$ given in the second member are complex functions defined in 
$\Omega.$ \\
$\mathfrak{I}$ is a subset of  $\{1,...,n\}\times \{1,...,m\},$ and the system \ref{W2} is a system with $card(\mathfrak{I})$ equations. \\
We denote $\mathfrak{I}^{1}=\{j\in \{1,...,m\}:\exists j_{0}\in \{1,...,n\}:(j_{0},j)\in \mathfrak{I}\},\;\;$ $\mathfrak{I}^{0}=\{i\in \{1,...,n\}:\exists i_{1}\in \{1,...,m\}:(i,i_{1})\in \mathfrak{I}\}, $ and $\overline{\mathfrak{I}^{1}}=\{1,...,m\}-\mathfrak{I}^{1},\;\;\overline{\mathfrak{I}^{0}}=\{1,...,n\}-\mathfrak{I}^{0}.$\\
Suppose that $(j_{0},j)$ and $(j'_{0},j)$ are two elements of $\mathfrak{I}.$ Then the system \ref{W2} contains the two equations 
\begin{equation}\label{W3}
\frac{\partial \phi^{0}}{\partial z^{j_{0}0}}+\frac{\partial \phi^{1}}{\partial z^{j1}}=\varphi^{j_{0},j},
\end{equation}
and
\begin{equation}\label{W1}
\frac{\partial \phi^{0}}{\partial z^{j'_{0}0}}+\frac{\partial \phi^{1}}{\partial z^{j1}}=\varphi^{j'_{0},j}.
\end{equation}
Therefore, 
\begin{equation}
\frac{\partial \phi^{1}}{\partial z^{j1}}=\varphi^{j_{0},j}-\frac{\partial \phi^{0}}{\partial z^{j_{0}0}}=\varphi^{j'_{0},j}-\frac{\partial \phi^{0}}{\partial z^{j'_{0}0}},
\end{equation}
and, consequently, 
\begin{equation}
\varphi^{j'_{0},j}=\varphi^{j_{0},j}+\frac{\partial \phi^{0}}{\partial z^{j'_{0}0}}-\frac{\partial \phi^{0}}{\partial z^{j_{0}0}}.
\end{equation}
Reporting this expression of $\varphi^{j'_{0},j}$ in \ref{W1} gives \ref{W3}. We have then two times the same equation. So, we suppose that if $(j_{0},j)$ and $(j'_{0},j)$ are in $\mathfrak{I},$ then $j'_{0}=j_{0}.$ Consequently, to one element $j$ of $\mathfrak{I}^{1}$ corresponds one element $j_{0}$ of $\mathfrak{I}^{0}.$ Similarly, to one element $i$ of $\mathfrak{I}^{0}$ corresponds one element $i_{1}$ of $\mathfrak{I}^{1}.$ We have a bijection $b$ between $\mathfrak{I}^{1}$ and $\mathfrak{I^{0}}.$ Often, we shall denote $j_{0}=b(j)$ when $j\in \mathfrak{I}^{1}.$ The number of equations in \ref{W2} is $card(\mathfrak{I}^{1})=card(\mathfrak{I}^{0})=card(\mathfrak{I}).$\\
According with \cite{BCGGG}, we note \\
$p^{k}_{i0}=\frac{\partial \phi^{k}}{\partial z^{i0}},$  $p^{k}_{i0,j1}=\frac{\partial^{2} \phi^{k}}{\partial z^{i0}\partial z^{j1}},$ $p^{k}_{i0,j1,l1}=\frac{\partial^{3} \phi^{k}}{\partial z^{i0}\partial z^{j1}\partial z^{l1}},$ $\varphi^{j_{0},j}_{i0}=\frac{\partial \varphi^{j_{0},j}}{\partial z^{i0}}$ and so on. We also note $\varphi^{j}=\varphi^{j_{0},j}.$\\
Then, \ref{W2} can be written 
\begin{equation}\label{W4}
p^{0}_{j_{0}0}+p^{1}_{j1}=\varphi^{j},\quad j\in \mathfrak{I}^{1}.
\end{equation}
We want to obtain the torsion of the system \ref{W4} by using the notations and results of \cite{BCGGG}. \\
If $I=\{i_{1},i_{2},...,i_{r} \}$ with $i_{k}\in \{1,...,n\}$ and $J=\{j_{1},...,j_{s}\}$ with $j_{k}\in \{ 1,...,m \}$  are multi-indices, we note $$I+i_{k}=I,i_{k}=\{i_{1},i_{2},...,i_{r},i_{k}\},\quad I-i_{k}=\{i_{1},i_{2},...,i_{k-1},i_{k+1},...,i_{r}\},$$
$$I0=\{i_{1}0,i_{2}0,...,i_{r}0\},
\quad I1=\{i_{1}1,i_{2}1,...,i_{r}1\},\quad p^{a}_{I0,J1}=\frac{\partial^{r+s} \phi^{a}}{\partial z^{i_{1}0}\partial z^{i_{2}0}...\partial z^{i_{r}0}\partial z^{j_{1}1}\partial z^{j_{2}1}...\partial z^{j_{s}1}}.$$
We want to look for the torsion of any order of the system \ref{W4} by using the methods and notations of 
\cite{BCGGG}. \\
We have, if we now note $I=\{i_{1}0,...,i_{r}0,j_{1}1,...,j_{s}1\}$ with $i_{k}\in \{1,...,n\}$ and $j_{l}\in\{1,...,m\},$ by deriving \ref{W4}
\begin{equation}\label{W8}
p^{1}_{I,j1}=\varphi^{j}_{I}-p^{0}_{I,j_{0}0}\quad when \;\; (j_{0},j)\in \mathfrak{I},
\end{equation}
and obtain the structure equations 
\begin{equation}\label{W5}
\begin{cases}
\theta^{0}:=d\phi^{0}-p^{0}_{i_{0}}dz^{i0}-p^{0}_{j1}dz^{j1}=0\\
\theta^{0,i0}:=dp^{0}_{i0}-p^{0}_{i_{0},i'0}dz^{i'0}-p^{0}_{i0,j'1}dz^{j'1}=0\\
\theta^{0,J}:=dp^{0}_{J}-p^{0}_{J,i'_{0}}dz^{i'0}-p^{0}_{J,j'1}dz^{j'1}=0\quad when\;\;J\subset I\\
\theta^{1}:=d\phi^{1}-p^{1}_{i_{0}}dz^{i0}-p^{1}_{j1}dz^{j1}=0\\
\theta^{1,i0}:=dp^{1}_{i0}-p^{1}_{i_{0},i'0}dz^{i'0}-p^{1}_{i0,j'1}dz^{j'1}=0\\
\theta^{1,J}:=dp^{1}_{J}-p^{1}_{J,i'0}dz^{i'0}-p^{1}_{J,j'1}dz^{j'1}=0\quad when\;\;J\subset I.\\
\end{cases}
\end{equation}
If $M$ is the variety in the space of the variables $\phi^{0},\;\phi^{1},\;z^{10},...,z^{n0};\;z^{11},...,z^{m1},\;p^{0}_{J},\;p^{1}_{J}\;\;(with\;\;J\subset I),\;p^{0}_{I,i'0}, \;p^{0}_{I,j'1},\;P^{1}_{I,i'0},\;p^{1}_{I,j'1},$ defined by the conditions \ref{W4}, we can consider the cotangent space $T^{*}M$ which is generated by 
\begin{equation}
\begin{split}
&\theta^{0},\;\theta^{1},\;\theta^{0,J},\;\theta^{1,J},\;\;(with\;\;J\subset I),\;dz^{10},...,dz^{n0},\;dz^{11},...,dz^{m1},\;dp^{0}_{J},\;dp^{1}_{J},\;\\
 &dp^{0}_{I,i'0},\;dp^{0}_{I,j'1},\;dp^{1}_{I,i'_{0}},\;\; and\;\; (when \;\;j'\in \overline{\mathfrak{I}^{1}}),\;\;dp^{1}_{I,j'1}.
\end{split}
\end{equation}
When $j'\in \mathfrak{I}^{1},$ according with \ref{W4}, $dp^{1}_{I,j'1}$ is replaced by $
d\varphi^{j'}_{I}-dp^{0}_{I,j'_{0}0}.$\\
From \ref{W5}, we can deduce
\begin{equation}\label{W6}
\begin{cases}
-d\theta^{0,J}&=dp^{0}_{J,i'0}dz^{i'0}+dp^{0}_{J,j'1}dz^{j'1}\quad with\;\;J\subset I\\
-d\theta^{1,J}&=dp^{1}_{J,i'0}dz^{i'0}+dp^{1}_{J,j'1}dz^{j'1}\quad with\;\;J\subset I,\quad J\neq I\\
-d\theta^{1,I}&=dp^{1}_{I,i'0}dz^{i'0}+dp^{1}_{I,j'1}dz^{j'1}\\
&=dp^{1}_{I,i'0}dz^{i'0}+\Sigma_{j'\in \mathfrak{I}^{1}}(d\varphi^{j'}_{I}-dp^{0}_{j'_{0}0,I})dz^{j'1}+\Sigma_{j'\in \overline{\mathfrak{I}^{1}}}dp^{1}_{I,j'1}\\
&=dp^{1}_{I,i'0}dz^{i'0}-\Sigma_{j'\in\mathfrak{I}^{1}}dp^{0}_{I,j'_{0}0}dz^{j'1}
+\Sigma_{j'\in\overline{\mathfrak{I}^{1}}}dp^{1}_{I,j'1}
+\Sigma_{j'\in\mathfrak{I}^{1}}\varphi^{j'}_{I,i0}dz^{i0}dz^{j'1}\\
&\qquad\qquad+\Sigma_{j'\in\mathfrak{I}^{1}}\varphi^{j'}_{I,j1}dz^{j1}dz^{j'1}.
\end{cases}
\end{equation}
We precise the summation domain under the sign $\Sigma,$ except when the dommain of summation concern all the indices, in this case, conformly with the Einstein convention, the indices are only repeated. \\
We have to compare our notations, inspired by \cite{WW2}, with those of \cite{BCGGG}. $\theta^{a},$ in \cite{BCGGG} page 129, is indexed by $a$ or $b$ so, here, we have $a=(0,J)$ with $J\subset I$ (possibly $\emptyset$). The variables are indexed by $i$ or $j,$ and, now, we have $i=i0\;\;or\;\;j1$ with $i\in \{1,...,n\}$ and $j\in \{1,...,m\}.$ And the terms $dp$ (noted $\pi$ in page 129 of \cite{BCGGG}) are indexed by $\varepsilon$ or $\delta$ which now becomes \\
$\varepsilon = (0,J)\;\;or\;\;(0,I,i'_{0})\;\;or\;\;(0,I,j'1)\;\;or\;\; (1,J)\;\;or\;\;(1,I,i'_{0})\;\;or\;\;(1,I,j'1).$\\
So, translating the formulas of \cite{BCGGG} in page 130, we have, from \ref{W6}, if $J\neq I,$
\begin{equation}\label{W7}
\begin{split}
A^{0,J}_{(0,J,i'0),i0}=A^{1,J}_{(1,J,i'0),i0}=\delta_{i}^{^{i'}},\quad A^{0,J}_{(0,J,j'1),j1}=A^{1,J}_{(1,J,j'1),j1}=\delta_{j}^{^{j'}},\\ 
c^{0,J}_{i0,i'0}=c^{0,J}_{j1,j'1}=c^{0,J}_{i0,j1}=c^{1,J}_{i0,i'0}=c^{1,J}_{j1,j'1}=c^{1,J}_{i0,j1}=0,
\end{split}
\end{equation}
all the others expressions $A^{0,J}_{.,.}$ being $0. $ And, when $J=I,$ we have
\begin{equation}\label{W7b}
\begin{split}
&A^{0,I}_{(0,I,i'0),i0}=\delta_{i}^{^{i'}},\quad A^{0,I}_{(0,I,j'1),j1}=\delta_{j}^{^{j'}},\\ 
&A^{1,I}_{(1,I,i'0),i0}=\delta_{i}^{^{i'}},\quad A^{1,I}_{(1,I,j'1),j1}=\delta_{j}^{^{j'}}\;\; if\;\;j\in\overline{\mathfrak{I}^{1}},\\ 
&A^{1,I}_{(0,I,j_{0}0),j1}=-1\;\; if\;\;j\in\mathfrak{I}^{1}\;\;and\;\; 0\;\; else,\\
&c^{0,I}_{i0,i'0}=c^{0,i}_{j1,j'1}=c^{0,I}_{i0,j1}=0\\
&c^{1,I}_{i0,i'0}=c^{1,I}_{j1,j'1}=c^{1,I}_{i0,j1}=0\;\; if\;\; j\in\overline{\mathfrak{I}^{1}},\\
&c^{1,I}_{i0,j1}=\frac{\partial \varphi^{j}_{I}}{\partial z^{i0}}=\varphi^{j}_{I,i0}\;\; and\;\;
c^{1,I}_{j'1,j1}=\frac{\partial \varphi^{j}_{I}}{\partial z^{j'1}}=\varphi^{j}_{I,j'1}\;\; if\;\;j\in\mathfrak{I}^{1}.
\end{split}
\end{equation}

Let $\mathcal{I}\subset \mathcal{J}\subset T^{*}M$ be a filtration of $T^{*}M$ like that of \cite{BCGGG} page 129, that is to say $\mathcal{I}$ is generated by $\theta^{0,J},\;\;\theta^{1,J}$ with $J\subset I;$ $\mathcal{J}$ is generated by $\theta^{0,J},\;\;\theta^{1,J},\;\;dz^{10},...,dz^{n0},\;\;dz^{11},...,dz^{m1};$ and the generators of $T^{*}M$ are given before.\\
If $p$ is an element of $\mathcal{J}^{\perp}\otimes \mathcal{J}/\mathcal{I},$ (see \cite{BCGGG} page 138), i.e. \\
$p=p^{(0,J)}_{i0}\frac{\partial }{\partial p^{(0,J)}}\otimes dz^{i0}+p^{(0,J)}_{j1}\frac{\partial }{\partial p^{(0,J)}}\otimes dz^{j1}+p^{(1,J)}_{i0}\frac{\partial }{\partial p^{(1,J)}}\otimes dz^{i0}+p^{(1,J)}_{j1}\frac{\partial }{\partial p^{(1,J)}}\otimes dz^{j1}+p^{(1,I,i'0)}_{i0}\frac{\partial }{\partial p^{(1,I,i'0)}}\otimes dz^{i0}+p^{(1,I,i'0)}_{j1}\frac{\partial }{\partial p^{(1,I,i'0)}}\otimes dz^{j1}+\Sigma_{j\in \overline{\mathfrak{I}^{1}}}\Big(p^{(1,I,j'1)}_{i0}\frac{\partial }{\partial p^{(1,I,j'1)}}\otimes dz^{i0}
+p^{(1,I,j'1)}_{j1}\frac{\partial }{\partial p^{(1,I,j'1)}}\otimes dz^{j1}\Big),$\\
we want, using the values $A^{\bullet}_{\bullet,\bullet}$ given in \ref{W7} and \ref{W7b}, to calculate $\overline{\pi}(p)$ (see page 138) and obtain 
\begin{equation}
\begin{split}
&\overline{\pi}(p)=\Sigma_{J\subset I}\big(p^{(0,J,i'0)}_{i0}-p^{(0,J,i0)}_{i'0}\big)\frac{\partial }{\partial \theta^{0,J}}\otimes dz^{i0}\wedge dz^{i'0}+2\big(p^{(0,J,j'1)}_{i0}-p^{(0,J,i0)}_{j'1}\big)\frac{\partial }{\partial \theta^{0,J}}\otimes dz^{i0}\wedge dz^{j'1}\\
&+\big(p^{(0,J,j'1)}_{j1}-p^{(0,J,j1)}_{j'1}\big)\frac{\partial }{\partial \theta^{0,J}}\otimes dz^{j1}\wedge dz^{j'1}\\
&+\Sigma_{J\subset I,\;\; J\neq I}\big(p^{(1,J,i'0)}_{i0}-p^{(1,J,i0)}_{i'0}\big)\frac{\partial }{\partial \theta^{1,J}}\otimes dz^{i0}\wedge dz^{i'0}\\
&+2\big(p^{(1,J,j1)}_{i0}-p^{(1,J,i0)}_{j1}\big)\frac{\partial }{\partial \theta^{1,J}}\otimes dz^{i0}\wedge dz^{j1}
+\big(p^{(1,J,j'1)}_{j1}-p^{(1,J,j1)}_{j'1}\big)\frac{\partial }{\partial \theta^{1,J}}\otimes dz^{j1}\wedge dz^{j'}\\
&+\big(p^{(1,I,i'0)}_{i0}-p^{(1,I,i0)}_{i'0}\big)\frac{\partial }{\partial \theta^{1,I}}\otimes dz^{i0}\wedge dz^{i'0}+2\Sigma_{j\in \overline{\mathfrak{I}^{1}}}\big(p^{(1,I,j1)}_{i0}-p^{(1,I,i0)}_{j1}\big)\frac{\partial }{\partial \theta^{1,I}}\otimes dz^{i0}\wedge dz^{j1}\\
&-2\Sigma_{j\in \mathfrak{I}^{1}}\big(p^{(1,I,i0)}_{j1}+p^{(0,I,j_{0}0)}_{i0}\big)\frac{\partial }{\partial \theta^{1,I}}\otimes dz^{i0}\wedge dz^{j1}\\
&+\Sigma_{j,j'\in \overline{\mathfrak{I}^{1}}}\big(p^{(1,I,j'1)}_{j1}-p^{(0,I,j1)}_{j'1}\big)\frac{\partial }{\partial \theta^{1,I}}\otimes dz^{j1}\wedge dz^{j'1}\\
&+2\Sigma_{j\in \mathfrak{I}^{1},j'\in\overline{\mathfrak{I}^{1}}}\big(p^{(1,I,j'1)}_{j1}+p^{(0,I,j_{0}0)}_{j'1}\big)\frac{\partial }{\partial \theta^{1,I}}\otimes dz^{j1}\wedge dz^{j'1}\\
&+\Sigma_{j,j'\in \mathfrak{I}^{1}}\big(p^{(0,I,j_{0}0)}_{j'1}-p^{(0,I,j'_{0}0)}_{j1}\big)\frac{\partial }{\partial \theta^{1,I}}\otimes dz^{j1}\wedge dz^{j'1}.
\end{split}
\end{equation}
Besides, always following the page 138 of \cite{BCGGG}, we have to calculate the element $c\in \mathcal{I}^{*}\otimes \bigwedge^{2}(\mathcal{J}/\mathcal{I})$ given by the values $c^{\bullet}_{\bullet,\bullet}$ in \ref{W7} and \ref{W7b}. We obtain 
\begin{equation}
\begin{split}
c&=\frac{\partial }{\partial \theta^{1,I}}\otimes \Sigma_{j\in \mathfrak{I}^{1}}d\varphi^{j}_{I}\wedge  dz^{j1}\\
&=\frac{\partial }{\partial \theta^{1,I}}\otimes \Sigma_{j\in \mathfrak{I}^{1}}\Big[\varphi^{j}_{I,i'0}dz^{i'0}\wedge dz^{j1}+\varphi^{j}_{I,j'1}dz^{j'i}\wedge dz^{j1}\Big].
\end{split}
\end{equation}
Now, the torsion of \ref{W8} vanishes if and only if there exists $p$ satisfying $\overline{\pi}(p)=c.$
It is easy to write this condition because $\overline{\pi}(p)$ and $c$ are expressed in the same base of 
$\mathcal{I}^{*}\otimes \bigwedge^{2}(\mathcal{J}/\mathcal{I}).$ We obtain the conditions:\\
if $J\subset I,$ 
\begin{equation}
\begin{cases}
p^{(0,J,i'0)}_{i0}-p^{(0,J,i0)}_{i'0}=0\\
p^{(0,J,j1)}_{i0}-p^{(0,J,i0)}_{j1}=0\\
p^{(0,J,j'1)}_{j1}-p^{(0,J,j1)}_{j'1}=0,
\end{cases}
\end{equation}
if $J\subset I$ and $J\neq I,$
\begin{equation}
\begin{cases}
p^{(1,J,i'0)}_{i0}-p^{(1,J,i0)}_{i'0}=0\\
p^{(1,J,j1)}_{i0}-p^{(1,J,i0)}_{j1}=0\\
p^{(1,J,j'1)}_{j1}-p^{(1,J,j1)}_{j'1}=0,
\end{cases}
\end{equation}
and 
\begin{equation}\label{W9}
\begin{cases}
p^{(1,I,i'0)}_{i0}-p^{(1,I,i0)}_{i'0}=0\\
p^{(1,I,j1)}_{i0}-p^{(1,I,i0)}_{j1}=0\quad if\;\;j\in \overline{\mathfrak{I}^{1}}\\
-\big(p^{(1,I,i0)}_{j1}+p^{(0,I,j_{0}0)}_{i0}\big)=\frac{1}{2}\varphi^{j}_{I,i0}\quad if\;\; j\in \mathfrak{I}^{1}\\
p^{(1,I,j'1)}_{j1}-p^{(1,I,j1)}_{j'1}=0\quad if\;\; j\;\; and\;\;j'\in \overline{\mathfrak{I}^{1}}\\
p^{(1,I,j'1)}_{j1}+p^{(0,I,j_{0}0)}_{j'1}=\frac{1}{2}\varphi^{j}_{I,j'1}\quad if\;\; j\in \mathfrak{I}^{1}
\;\;and\;\;j'\in\overline{\mathfrak{I}^{1}}\\
p^{(0,I,j_{0}0)}_{j'1}-p^{(0,I,j'_{0}0)}_{j1}=\varphi^{j}_{I,j'1}-\varphi^{j'}_{I,j1}\quad if\;\; j\;\;and\;\; j'\in \mathfrak{I}^{1}.
\end{cases}
\end{equation}
The two first systems are easily satisfied. Also, the five first equations of the last system. The third gives
\begin{equation}
p^{(1,I,i0)}_{j1}=-p^{(0,I,j_{0}0)}_{i0}-\frac{1}{2}\varphi^{j}_{I,i0}\quad if\;\;j\in\mathfrak{I}^{1},
\end{equation}
and the last but one 
\begin{equation}
p^{(1,I,j'1)}_{j1}=\frac{1}{2}\varphi^{j}_{I,j'1}-p^{(0,I,j_{0}0)}_{j'1}\quad if\;\;j\in\mathfrak{I}^{1}
\;\; and\;\;j'\in\overline{\mathfrak{I}^{1}}. 
\end{equation}
Now, it remains the last equation. First, if $I=\{k_{0}0\}$ with $k\in\mathfrak{I}^{1},$ it gives 
\begin{equation}\label{W10}
\begin{split}
p^{(0,k_{0}0,j_{0}0)}_{j'1}&=\varphi^{j}_{k_{0}0,j'1}-\varphi^{j'}_{k_{0}0,j1}
+p^{(0,k_{0}0,j'_{0}0)}_{j1}\\
&=\varphi^{j}_{k_{0}0,j'1}-\varphi^{j'}_{k_{0}0,j1}+\varphi^{k}_{j'_{0}0,j1}-\varphi^{j}_{j'_{0}0,k1}
+p^{(0,j_{0}0,j'_{0}0)}_{k1}\\
&=\varphi^{k}_{j_{0}0,j'1}-\varphi^{j'}_{j_{0}0,k1}+p^{(0,j_{0}0,j'_{0}0)}_{k1}.
\end{split}
\end{equation}
After simplification between the two last lines, we have, if $j,\;j',\;k\in \mathfrak{I}^{1},$
\begin{equation}\label{W12}
\big(\varphi^{j}_{k_{0}0,j'1}-\varphi^{j}_{j'_{0}0,k1}\big)+\big(\varphi^{j'}_{j_{0}0,k1}-
\varphi^{j'}_{k_{0}0,j1}\big)+(\varphi^{k}_{j'_{0}0,j1}-\varphi^{k}_{j_{0}0,j'1}\big)=0.
\end{equation}
Conversely, if this condition is satisfied, it is possible to find $p^{(0,j_{0}0,k_{0}0)}_{j'1}$, symmetric in $j,k,$  verifying the first line of \ref{W10}, that is to say, the last line of \ref{W9}.\\
If, now, $I=\{i0\}$ with $i\in\overline{\mathfrak{I}^{0}},$ (i.e. there is no $i_{1}\in\{1,...,m\}$ verifying 
$(i,i_{1})\in\mathfrak{I}),$ then, the last line of \ref{W9} says 
\begin{equation}
p^{(0,i0,j_{0}0)}_{j'1}=\varphi^{j}_{i0,j'1}-\varphi^{j'}_{i0,j1}+p^{(0,i0,j'_{0}0)}_{j1},
\end{equation}
and this does not implies constraint. \\
At last, if $I=\{j1\},$ we do not have any constraint, even if $j\in\mathfrak{I}^{1}.$\\
Now, we want to look at the case where $I$ contains more than one only element. If $I$ does not contain any element of $\mathfrak{I}^{0},$ there is no constraint. But, if $I$ contains an element $k_{0}0$ with  $k_{0}\in\mathfrak{I}^{0},$ that is to say $\exists k\in\mathfrak{I}^{1}$ such that $(k_{0},k)\in\mathfrak{I}.$ Then, from \ref{W9}, 
\begin{equation}
\begin{split}
p^{(0,I,j_{0}0)}_{j'1}&=\varphi^{j}_{I,j'1}-\varphi^{j'}_{I,j1}+p^{(0,I,j'_{0}0)}_{j1}\\
&=\varphi^{j}_{I,j'1}-\varphi^{j'}_{I,j1}+\varphi^{k}_{I-k_{0},j'_{0}0,j1}-\varphi^{j}_{I-k_{0},j'_{0}0,k1}+p^{(0,I-k_{0},j'_{0}0,j_{0}0)}_{k1}\\
&=\varphi^{k}_{I-k_{0},j_{0}0,j'1}-\varphi^{j'}_{I-k_{0},j_{0}0,k1}+p^{(0,I-k_{0},j'_{0}0,j_{0}0)}_{k1}.
\end{split}
\end{equation}
Simplifying the two last lines, we obtain
\begin{equation}
\big(\varphi^{j}_{I,j'1}-\varphi^{j}_{I-k_{0},j'_{0}0,k1}\big)+\big(\varphi^{j'}_{I-k_{0},j_{0}0,k1}-\varphi^{j'}_{I,j1}\big)+\big(\varphi^{k}_{I-k_{0},j'_{0}0,j1}-\varphi^{k}_{I-k_{0},j_{0}0,j'1}\big)=0,
\end{equation}
or again 
\begin{equation}
\frac{\partial}{\partial (I-k_{0})}\Big[\big(\varphi^{j}_{k_{0}0,j'1}-\varphi^{j}_{j'_{0}0,k1}\big)+\big(\varphi^{j'}_{j_{0}0,k1}
-\varphi^{j'}_{k_{0}0,j1}\big)+\big(\varphi^{k}_{j'_{0}0,j1}-\varphi^{k}_{j_{0}0,j'1}\big)\Big]=0.
\end{equation}
In the brackets $\big[.\big]$ we have the quantity \ref{W12} which is zero. So, we have no new condition. The condition \ref{W12} is the only condition for the system \ref{W8} having no torsion. \\
\\
\\
\\
\\
Here, we want to calculate the Hilbert-Poincar\'e series of the previous system. As \cite{BCGGG}, we denoted by $A^q$, the set of homogeneous solutions of degree $q+1$ to the homogeneous PDE system deduced from $4$. We are able now to recall the definition of the Hilbert-Poincar\'e series
\begin{defn}For a linear PDE system with constant coefficients, the Hilbert-Poincar\'e series is $\sum_qdim(A^q)z^q$ which is defined on the disk of radius $1$.\end{defn} 
Moreover, by general results, we know
\begin{theorem}
The Hilbert-Poincar\'e series is a rational function.
\end{theorem}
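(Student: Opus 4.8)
The plan is to reinterpret the generating function $\sum_{q}\dim(A^{q})z^{q}$ as a shift of the Hilbert series of a finitely generated graded module over a polynomial ring, so that rationality becomes an instance of the Hilbert--Serre theorem. Throughout, the homogeneous system deduced from \ref{W4} is $\sigma(\partial)\phi=0$ with $\phi=(\phi^{0},\phi^{1})$, where $\sigma(\xi)$ is the $\mathrm{card}(\mathfrak{I}^{1})\times 2$ matrix of linear forms in the variables $\xi^{ij}$ dual to the $z^{ij}$, whose $(j,0)$ entry is $\xi^{j_{0}0}$ and whose $(j,1)$ entry is $\xi^{j1}$; since the operators are first order with constant coefficients, the operator is literally $\sigma(\partial)$, and likewise every operator occurring in the complexes built in this paper has constant coefficients and is homogeneous, hence coincides with $\sigma(\partial)$ for its own symbol matrix.

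\emph{The symbol module.} Let $R=\C[\xi^{ij}]$ be the polynomial ring in $n+m$ variables with its standard grading, and put $M=\coker\bigl(R^{\mathrm{card}(\mathfrak{I}^{1})}\xrightarrow{\,\sigma(\xi)^{T}\,}R^{2}\bigr)$, the target free module being placed in degree $0$ so that $\sigma(\xi)^{T}$ raises degree by one. Then $M$ is a finitely generated graded $R$-module, $R$ being Noetherian. Next, for each $d$ the apolar pairing $\langle z^{\alpha},\xi^{\beta}\rangle=\alpha!\,\delta_{\alpha\beta}$ is a perfect pairing $\C[z]_{d}\times\C[\xi]_{d}\to\C$ under which $\partial/\partial z^{ij}$ is adjoint to multiplication by $\xi^{ij}$; extending it coordinatewise, $\sigma(\partial)$ becomes the adjoint of $\sigma(\xi)^{T}$. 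Hence the space of homogeneous degree-$d$ solutions of $\sigma(\partial)\phi=0$ is exactly the annihilator in $\C[z]^{2}_{d}$ of $\mathrm{im}(\sigma(\xi)^{T})_{d}\subset\C[\xi]^{2}_{d}$, so its dimension is $\dim_{\C}\C[\xi]^{2}_{d}-\dim_{\C}\mathrm{im}(\sigma(\xi)^{T})_{d}=\dim_{\C}M_{d}$. With $d=q+1$ this gives $\dim(A^{q})=\dim_{\C}M_{q+1}$, whence
\[
\sum_{q\ge 0}\dim(A^{q})\,z^{q}=z^{-1}\bigl(H_{M}(z)-\dim_{\C}M_{0}\bigr),\qquad H_{M}(z):=\sum_{d\ge 0}\dim_{\C}M_{d}\,z^{d}.
\]

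\emph{Conclusion.} By the Hilbert--Serre theorem the Hilbert series of a finitely generated graded module over $\C[\xi^{1},\dots,\xi^{n+m}]$ has the form $Q(z)/(1-z)^{n+m}$ with $Q\in\Z[z]$, so $\sum_{q}\dim(A^{q})z^{q}$ is rational; and since $\dim_{\C}M_{d}$ agrees with a polynomial of degree at most $n+m-1$ in $d$ for $d$ large, the series converges on the disk of radius $1$, matching the Definition above.

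The delicate part, and the step I expect to require the most care, is the duality: one must verify that the symbol matrix of the homogeneous system really is the transpose of the operator for the apolar pairing, keep track of the single degree shift responsible for the factor $z^{-1}$ (degree-$(q+1)$ solutions correspond to $M_{q+1}$), and ascertain that the system at hand is homogeneous with constant coefficients so that $M$ is genuinely graded---which holds for \ref{W4} because it is first order with constant coefficients, and for all the subsequent operators of the associated complex since they too are constant-coefficient and homogeneous. Once the identification $\dim(A^{q})=\dim_{\C}M_{q+1}$ is in place, the theorem is a purely commutative-algebra statement and nothing further is needed.
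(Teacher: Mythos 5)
Your proof is correct, but it follows a genuinely different route from the paper, which in fact offers no proof at all: the theorem is stated with the remark ``by general results, we know,'' and the only concrete justification in the text is the subsequent explicit computation $\dim A^{(q)}=tC^{m+n-t}_{q+m+n-t}+2C^{m+n-t-1}_{q+m+n-t}$, a polynomial in $q$, which yields rationality directly (and only) for the specific system \ref{W4}. You instead supply the missing general argument: the apolar pairing identifies $A^{q}$ with the degree-$(q+1)$ component of the cokernel $M$ of the transposed symbol matrix, so the generating function is a shift of the Hilbert series of a finitely generated graded $\C[\xi]$-module and Hilbert--Serre gives rationality of the form $Q(z)/(1-z)^{n+m}$; the duality, the single degree shift, and the homogeneity of the first-order constant-coefficient symbol are all handled correctly. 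What your approach buys is a proof valid for any homogeneous constant-coefficient system (hence an actual justification of the theorem as stated), together with the structural information that the series has denominator $(1-z)^{n+m}$; what the paper's computation buys is the exact closed form of $\dim A^{(q)}$, which is what is actually used later (in Section 5, to show the Cauchy--Fueter complex terminates). The one caveat worth noting is that for a system whose operators are not all of the same order one must first pass to the associated homogeneous (leading-symbol) system, as the paper's definition of $A^{q}$ implicitly does; your argument applies verbatim once that reduction is made.
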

By rearranging the variables $z^{i,0},\;z^{j,1},$ if $card(\mathfrak{I})=t,$ we may suppose that\\ 
$\mathfrak{I}=\{(n-k,m-k): k=0,1,...,t-1\}.$ \\
Then, the system \ref{W4} may be written 
\begin{equation}
p^{0}_{n-k,0}+p^{1}_{m-k,1}=\varphi^{k},\;\;\;\forall k=0,...,t-1.
\end{equation}
We have
\begin{equation}
A^{(q)}=\Big\{f=(f_{0},f_{1}): f_{j}=\Sigma_{\mid I\mid =q+1}A^{j}_{I}z^{I}:\frac{\partial f_{0}}{\partial z^{n-k,0}}+\frac{\partial f_{1}}{\partial z^{m-k,1}}=0,\;k=0,...,t-1\Big\}.
\end{equation}
Sometimes, we shall note the variables\\
 $(z^{1,0},\;z^{2,0},...,z^{n,0},\;z^{1,1},...,z^{m,1})=(z^{1},z^{2},...,z^{n},z^{n+1},...,z^{m+n}),$ and
the multi-index $I$ will be note 
$I=(i_{1},i_{2},...,i_{q+1})$ with $i_{j}\in \{(1,0),\;(2,0),...,(n,0),(1,1),...,(m,1)\}=\{1,2,...,n,n+1,...,n+m\}$ or $I=[l^{I}_{1},l^{I}_{2},...,l^{I}_{m+n}]=[l^{I}_{1,0},l^{I}_{2,0},...,l^{I}_{m,1}]$ where $l^{I}_{i,0}=l^{I}_{i}$ is the number of $i=(i,0)$ in $I,$ and $l^{I}_{n+j}=l^{I}_{j,1}$ is the number of $n+j=(j,1)$ in $I.$\\
The above-mentioned condition on $f$ may be written
\begin{equation}
\Sigma_{\mid I\mid=q+1}l^{I}_{n-k,0}A^{0}_{I}z^{I-(n-k,0)}+l^{I}_{m-k,1}A^{1}_{I}z^{I-(m-k,1)}=0,\quad \forall k=0,...,t-1,
\end{equation}
that is to say, for all multi-index $J$ such that $\mid J\mid=q,$ and all $k=0,...,t,$

\begin{equation}
(l^{J}_{n-k,0}+1)A^{0}_{J+(n-k,0)}+(l^{J}_{m-k,1}+1)A^{1}_{J+(m-k,1)}=0,
\end{equation}
or
\begin{equation}\label{W24}
A^{1}_{J+(m-k,1)}=-\frac{(l^{J}_{n-k,0}+1)A^{0}_{J+(n-k,0)}}{l^{J}_{m-k,1}+1}.
\end{equation}

Therefore, if the quantities $A^{0}_{I}$ are knonwn, then the quantities $A^{1}_{I}$ also, except when  $I\cap \mathfrak{J}^{1}=\varnothing$ where $\mathfrak{J}=\{(m-t+1,1),(m-t+2,1),...,(m,1)\}=\{n+m-t+1,n+m-t+2,...,n+m\}.$\\
But, the quantities $A^{0}_{I}$ have to verify another condition. If $J'$ is a multi-index such that $\mid J'\mid=q-1,$ and $k_{1},k_{2}=0,1,...,t-1,$ then, by \ref{W24},
\begin{equation}
\begin{split}
A^{1}_{J'+(m-k_{1},1)+(m-k_{2},1)}&=-\frac{l^{J'+(m-k_{2},1)}_{(n-k_{1},0)}+1}{l^{J'+(m-k_{2},1)}_{(m-k_{1},1)}+1}A^{0}_{J'+(m-k_{2},1)+(n-k_{1},0)}\\
&=-\frac{l^{J'+(m-k_{1},1)}_{(n-k_{2},0)}+1}{l^{J'+(m-k_{1},1)}_{(m-k_{2},1)}+1}A^{0}_{J'+(m-k_{1},1)+(n-k_{2},0)},
\end{split}
\end{equation}

and therefore,
\begin{equation}
\begin{split}
\Big(l^{J'+(m-k_{2},1)}_{(n-k_{1},0)}+1\Big)&\Big(l^{J'+(m-k_{1},1)}_{(n-k_{2},1)}+1\Big)A^{0}_{J'+(m-k_{2},1)+(n-k_{1},0)}\\
&=\Big(l^{J'+(m-k_{1},1)}_{(n-k_{2},0)}+1\Big)\Big(l^{J'+(m-k_{2},1)}_{(n-k_{1},1)}+1\Big)A^{0}_{J'+(m-k_{1},1)+(n-k_{2},0)}
\end{split}
\end{equation}
So, except for a multiplicative constant, in this equality, we can interchange $k_{1}$ and $k_{2}.$ \\
Now, if $I=I'_{0}+I''_{0}+I'_{1}+I''_{1}$ with $I'_{0}\subset \{(1,0),(2,0),...,(n-t,0)\},\quad I''_{0}\subset\{(n-k,0),\;k=0,...,t-1\},\quad\\
 I'_{1}\subset\{(1,1),...,(m-t,1)\},\quad I''_{1}\subset\{(m-k,1),\;k=0,...,t-1\},$ then, to define $I''_{0}+I''_{1},$ with, for example, $\mid I''_{0}\mid+\mid I''_{1}\mid=s,$ it suffices to give the numbers $k_{1},...,k_{s}$ with $k_{j}\in \{0,...,t-1\},$ and then, these $k_{j}$ been interchangeables, we have to affect $0$ to some, and $1$ to the others, which we have $s+1$ ways to do. \\
We have $C^{t-1}_{t-1+s}$ ways to choose $k_{1},...,k_{s}$ and, therefore, $(s+1)C^{t-1}_{t-1+s}$ manners to choose $I''_{0}+I''_{1}.$ \\
We then have $C^{m+n-2t-1}_{q+m+n-2t-s}$ choices to define $I'_{0}+I'_{1}$ when $\mid I'_{0}\mid+\mid I'_{1}\mid=q+1-s.$ \\
At last, we have $(s+1)C^{t-1}_{t-1+s}C^{m+n-2t-1}_{q+m+n-2t-s}$ manners to choose $A^{0}_{I}$ if $\mid I\mid=q+1$ and $\mid I''_{0}+I''_{1}\mid=s.$\\
In the same way, we have $C^{m+n-t-1}_{q+m+n-t}$ choices for $A^{1}_{I}$ if $\mid I\mid=q+1$ and $I\cap \mathfrak{J}^{1}=\varnothing.$\\
For the following calculations, we need a numeric lemma. 
\begin{lemma}
\begin{equation}
\begin{split}
&\Sigma_{s=0}^{q}C^{a}_{a+s}=C^{a+1}_{a+q+1}\\
&\Sigma_{s=p}^{q}C^{a}_{a+s}=C^{a+1}_{a+q+1}-C^{a+1}_{a+p}\\
&\Sigma_{s=0}^{q}C^{b}_{a+s}=C^{b+1}_{a+q+1}-C^{b+1}_{a}\\
&\Sigma_{s=0}^{q}(s+1)C^{a}_{a+s}=(a+1)C^{a+2}_{a+q+1}+C^{a+1}_{a+q+1}\\
&\Sigma_{s=0}^{q}(s+1)C^{b}_{a+s}=(b+1)[C^{b+2}_{a+q+1}-C^{b+2}_{a}]+(b-a+1)[C^{b+1}_{a+q+1}-C^{b+1}_{a}]\;\;if\;\;a\geq b+2\\
&\Sigma_{s=0}^{d}C^{a}_{a+s}C^{b-d}_{b-s}=C^{a+b+1-d}_{a+b+1}\;\;if\;\;d\leq b.
\end{split}
\end{equation}
\end{lemma}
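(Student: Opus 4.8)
The plan is to prove each of the six binomial identities by a combination of the Pascal recurrence and induction, or equivalently by standard generating-function manipulation; all six are classical and none requires a clever trick, so the "proof" amounts to organizing the bookkeeping. Throughout I write $C^a_b=\binom{b}{a}$ as in the paper, with the convention that $C^a_b=0$ when $b<a$ or $a<0$.

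\medskip
\textbf{First two identities (hockey-stick).} The identity $\sum_{s=0}^q C^a_{a+s}=C^{a+1}_{a+q+1}$ is the classical hockey-stick summation; I would prove it by induction on $q$, the inductive step being exactly the Pascal relation $C^{a+1}_{a+q}+C^a_{a+q+1}=C^{a+1}_{a+q+1}$. The second identity follows immediately by subtraction, writing $\sum_{s=p}^q=\sum_{s=0}^q-\sum_{s=0}^{p-1}$ and applying the first identity twice.

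\medskip
\textbf{Third identity.} For $\sum_{s=0}^q C^b_{a+s}=C^{b+1}_{a+q+1}-C^{b+1}_a$, I would again induct on $q$: the step uses $C^{b+1}_{a+q}+C^b_{a+q+1}=C^{b+1}_{a+q+1}$, which is Pascal applied at the top of the range; the base case $q=0$ reads $C^b_a=C^{b+1}_{a+1}-C^{b+1}_a$, once more Pascal. (This identity in fact generalizes the first, which is the case $b=a$, but it is cleanest to treat them in the order the paper lists them.)

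\medskip
\textbf{Identities four and five (weighted sums).} Here I would use the absorption/Abel-type manipulation rather than a bare induction. Write $(s+1)C^a_{a+s}=(a+1)C^{a+1}_{a+s+1}$ — this is the identity $(s+1)\binom{a+s}{a}=(a+1)\binom{a+s+1}{a+1}$, which one checks by expanding both sides into factorials. Then $\sum_{s=0}^q(s+1)C^a_{a+s}=(a+1)\sum_{s=0}^q C^{a+1}_{(a+1)+s}=(a+1)C^{a+2}_{a+q+2}$ by the first identity with $a$ replaced by $a+1$; a further application of Pascal, $C^{a+2}_{a+q+2}=C^{a+2}_{a+q+1}+C^{a+1}_{a+q+1}$, rewrites this as $(a+1)C^{a+2}_{a+q+1}+C^{a+1}_{a+q+1}$, which is the stated form. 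For the fifth identity I would split $(s+1)C^b_{a+s}$ using $s+1=(a+s+1)-a$ and the absorption rule $(a+s+1)C^b_{a+s}$... more directly, write $(s+1)C^b_{a+s}=(b+1)C^{b+1}_{a+s+1}-(a-b)C^b_{a+s}$ (verify by factorials; this is where the hypothesis $a\geq b+2$, i.e. $a-b\geq 2>0$, makes the bookkeeping of vanishing boundary terms clean), then apply the third identity to each of the two resulting sums and collect terms, using $C^{b+2}_{a+q+1}=C^{b+2}_{a+q}+C^{b+1}_{a+q}$ etc. to match the precise shape on the right.

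\medskip
\textbf{Sixth identity (Vandermonde).} The last identity $\sum_{s=0}^d C^a_{a+s}C^{b-d}_{b-s}=C^{a+b+1-d}_{a+b+1}$ for $d\leq b$ is a form of the Vandermonde convolution. The cleanest route is generating functions: $C^a_{a+s}$ is the coefficient of $x^s$ in $(1-x)^{-(a+1)}$ and, since $C^{b-d}_{b-s}=C^{b-d}_{(b-d)+(d-s)}$, the factor $C^{b-d}_{b-s}$ is the coefficient of $x^{d-s}$ in $(1-x)^{-(b-d+1)}$; hence the sum is the coefficient of $x^d$ in $(1-x)^{-(a+b-d+2)}$, namely $C^{a+b-d+1}_{a+b-d+1+d}=C^{a+b+1-d}_{a+b+1}$. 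One should remark that the upper limit $s=d$ is exactly the point beyond which $C^{b-d}_{b-s}=0$ (using $d\le b$ so that all intermediate terms are genuine binomials), so extending the sum to infinity changes nothing and the generating-function identity applies verbatim. Alternatively one can give a purely inductive proof on $d$ using Pascal on both factors, but the generating-function argument is shorter and less error-prone.

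\medskip
\textbf{Main obstacle.} None of the steps is deep; the only place demanding care is the fifth identity, where getting the coefficients $(b+1)$ and $(b-a+1)$ and the four separate binomial terms to line up requires tracking several Pascal substitutions and the boundary contributions at $s=0$ and $s=q$, and it is there that the hypothesis $a\geq b+2$ is actually used to keep those boundary terms from producing spurious corrections. I would therefore present identities one through three and six briefly and spend the bulk of the written proof on the algebra behind identities four and five.
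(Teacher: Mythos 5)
Your overall plan is sound and matches the paper's spirit (the paper itself only writes out the last identity, by iterating the hockey-stick summation after an interchange of the order of summation; your generating-function route to that identity is different but equally valid and correct). However, the key step you use for the fourth and fifth identities is false as stated. You claim $(s+1)C^{a}_{a+s}=(a+1)C^{a+1}_{a+s+1}$ and propose to ``check it by expanding into factorials''; that check fails, since
\begin{equation*}
(a+1)\binom{a+s+1}{a+1}=\frac{(a+s+1)!}{a!\,s!}=(a+s+1)\binom{a+s}{a},
\end{equation*}
so the correct absorption rule is $(a+s+1)C^{a}_{a+s}=(a+1)C^{a+1}_{a+s+1}$, and your version is off by the term $a\,C^{a}_{a+s}$ (take $a=s=1$: $2\binom{2}{1}=4$ while $2\binom{3}{2}=6$). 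The same slip occurs in your splitting for the fifth identity, where $(b+1)C^{b+1}_{a+s+1}-(a-b)C^{b}_{a+s}$ equals $(s+1+b)C^{b}_{a+s}$, not $(s+1)C^{b}_{a+s}$. As a consequence the displayed chain for identity four is not valid: $(a+1)C^{a+2}_{a+q+2}$ is \emph{not} equal to the stated right-hand side; Pascal gives $(a+1)C^{a+2}_{a+q+2}=(a+1)C^{a+2}_{a+q+1}+(a+1)C^{a+1}_{a+q+1}$, and you silently drop the factor $(a+1)$ from the second term. The two errors happen to compensate in the final answer, but neither intermediate equality holds.

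The repair is short and is in fact the idea you gesture at for identity five: write $s+1=(a+s+1)-a$ first, then absorb. This gives $(s+1)C^{a}_{a+s}=(a+1)C^{a+1}_{a+s+1}-a\,C^{a}_{a+s}$, whence
\begin{equation*}
\Sigma_{s=0}^{q}(s+1)C^{a}_{a+s}=(a+1)C^{a+2}_{a+q+2}-a\,C^{a+1}_{a+q+1}
=(a+1)C^{a+2}_{a+q+1}+C^{a+1}_{a+q+1},
\end{equation*}
and similarly $(s+1)C^{b}_{a+s}=(b+1)C^{b+1}_{a+s+1}-a\,C^{b}_{a+s}$ together with the third identity yields the fifth after one application of Pascal to $C^{b+2}_{a+q+2}$ and $C^{b+2}_{a+1}$. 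With this correction your proofs of all six identities go through.
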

The proofs are elementary. We only write the last one. 

\begin{equation}
\Sigma_{s=k}^{d}C^{b-d}_{b-s}=\Sigma_{s=k}^{d}C^{b-d}_{b-d+(d-s)}=\Sigma_{s'=0}^{d-k}C^{b-d}_{b-d+s'}=C^{b-d+1}_{b+1-k},
\end{equation}
so
\begin{equation}
\begin{split}
\Sigma_{s=0}^{d}C^{a}_{a+s}C^{b-d}_{b-s}&=\Sigma_{s=0}^{d}C^{b-d}_{b-s}\Sigma_{k=0}^{s}C^{a-1}_{a-1+k}=\Sigma_{k=0}^{d}\Sigma_{s=k}^{d}C^{a-1}_{a-1+k}C^{b-d}_{b-s}\\
&=\Sigma_{k=0}^{d}C^{a-1}_{a-1+k}C^{b+1-d}_{b+1-k}=\Sigma_{k=0}^{d}C^{a-2}_{a-2+k}C^{b+2-d}_{b+2-k}=...=\Sigma_{s=0}^{d}C^{0}_{0+s}C^{a+b-d}_{a+b-s}\\
&=\Sigma_{s'=0}^{d}C^{a+b-d}_{a+b-d+s'}=C^{a+b-d+1}_{a+b+1}.
\end{split}
\end{equation}
Using this lemma, we obtain the dimension of the space $A^{(q)}$
\begin{equation}
\begin{split}
Dim A^{(q)}&=\Sigma_{s=0}^{q+1}(s+1)C^{t-1}_{t-1+s}C^{m+n-2t-1}_{q+m+m-2t-s}+C^{m+n-t-1}_{q+m+n-t}\\
&=\Sigma_{s=1}^{q+1}sC^{t-1}_{t-1+s}C^{m+n-2t-1}_{q+m+m-2t-s}+\Sigma_{s=0}^{q+1}C^{t-1}_{t-1+s}C^{m+n-2t-1}_{q+m+m-2t-s}+C^{m+n-t-1}_{q+m+n-t}\\
&=t\Sigma_{s=1}^{q+1}C^{t}_{t-1+s}C^{m+n-2t-1}_{q+m+m-2t-s}+C^{m+n-t-1}_{q+m+n-t}+C^{m+n-t-1}_{q+m+n-t}\\
&=tC^{m+n-t}_{q+m+n-t}+2C^{m+n-t-1}_{q+m+n-t}.
\end{split}
\end{equation}

\section{Torsion's system of the $1$-Cauchy-Fueter equation}
As we saw in the introduction, the second step of the $1$-Cauchy-Fueter complex involved the non-homogeneous torsion's equations of the $1$-Cauchy-Fueter equations :
\begin{equation}\label{w9b}
{\partial^{2}\Phi_{k}\over \partial z^{i1}\partial z^{\theta 0}}-{\partial^{2}\Phi_{k}\over \partial z^{i0}\partial z^{\theta 1}}+{\partial^{2}\Phi_{i}\over \partial z^{k0}\partial z^{\theta 1}}-{\partial^{2}\Phi_{i}\over \partial z^{k1}\partial z^{\theta 0}}+{\partial^{2}\Phi_{\theta}\over \partial z^{i0}\partial z^{k 1}}-{\partial^{2}\Phi_{\theta}\over \partial z^{i1}\partial z^{k 0}}=\varphi_{i\theta k},
\end{equation}
 for all $i,\theta,k$ dans $\{0,1,2,3\}$.
It is easy to see that the left hand term is antisymmetric in $(i,\theta,k)$, so $\varphi_{i\theta k}$  must to be $\C$-analytic in $z=(z^{ij})$ and antisymmetric in $(i,\theta,k)$ therefore gives an element  of $\Lambda^3 (\C[[Z]])^4$. In the following it will be clear that this condition is not sufficient to solve the previous system. The linear system defining the torsion is given by 
\begin{equation}\label{W8b}
\begin{cases}
p^{k}_{i1\theta 0}-p^{k}_{i0\theta 1}+p^{i}_{k0\theta 1}-p^{i}_{k1\theta 0}+p^{\theta}_{i0k1}-p^{\theta}_{i1k0}=\varphi_{i\theta k}\\
p^{k}_{i1\theta 0l0}-p^{k}_{i0\theta 1l0}+p^{i}_{k0\theta 1l0}-p^{i}_{k1\theta 0l0}+p^{\theta}_{i0k1l0}-p^{\theta}_{i1k0l0}={\partial \varphi_{i\theta k}\over \partial z^{l0}}\\ 
p^{k}_{i1\theta 0l1}-p^{k}_{i0\theta 1l1}+p^{i}_{k0\theta 1l1}-p^{i}_{k1\theta 0l1}+p^{\theta}_{i0k1l1}-p^{\theta}_{i1k0l1}={\partial \varphi_{i\theta k}\over \partial z^{l1}},\end{cases}
\end{equation}
where $p^{k}_{ijlq}$ are symmetric by interchanging the pairs $ij$ and $lq$ and $p^{k}_{ijlqpr}$ are symmetric by interchanging  the pairs $ij$, $lq$ and $qr$. The terms at the right and left hand of the equality are antisymmetric with respect to $i,\theta,k$ so it is enough to solve the last two equations with $i<\theta<k$. Consider the form $f=\sum_{i<\theta< k, l}{\partial \varphi_{i\theta k}\over \partial z^{l0}}X^{l}dX^{i}\wedge dX^{\theta}\wedge dX^{k}$ and suppose that we can find a 2-form, $u$, with homogeneous symmetric polynomials of degree 2 as coefficients  : $u=\sum_{i,k,l,\theta}a_{ki\theta l}X^lX^{\theta}dX^{i}\wedge dX^{k}$ such that $du=f$ then $p^{k}_{i1\theta 0 l0}:=a_{ki\theta l}$ solve the second line of equations of \ref{W8b}. On the other hand if we have solutions of the equations, we have a solution of $du=f$. By classical results, this is possible if and only if $df=0$. These conditions give : 
\begin{equation}\label{W11}
{\partial \varphi_{i\theta k}\over \partial z^{l0}}-{\partial \varphi_{l\theta k}\over \partial z^{i0}}+{\partial \varphi_{lik}\over \partial z^{\theta 0}}-{\partial \varphi_{li\theta}\over \partial z^{k0}}=0,
\end{equation}for all $i,\theta,k, l\in\{0,1,2,3\}.$ We can do the same thing with the third line equations \ref{W8b} and we obtain the condition
\begin{equation}\label{W12b}
{\partial \varphi_{i\theta k}\over \partial z^{l1}}-{\partial \varphi_{l\theta k}\over \partial z^{i1}}+{\partial \varphi_{lik}\over \partial z^{\theta 1}}-{\partial \varphi_{li\theta}\over \partial z^{k1}}=0.
\end{equation} It is easy to see that the last equations are antisymmetric in $i,\theta,k, l$.
\medskip

The calculus of the torsion for the prolongation system is more technical. So we need the following lemma:
\begin{lemma}\label{W22}
Let $J=(j_1,\cdots,j_l)$ and $\Lambda=(\lambda_1,\cdots,\lambda_{l^{'}})$ two multi-index We denote by $J^{'}$, $J^{''}$, $J^{'''}$, $J/\{j_l\}$, $J/\{j_{l-1},j_l\}$, $J/\{j_{l-2},j_{l-1},j_l\}$ respectively. If $X_{J}$ are numbers indexed by $J$ and furthermore if these numbers are invariant by permutation of two elements of $J$, we write $X_{(J)}$. Now suppose that we have the identity between the two following forms
\begin{equation}\label{W16}
\begin{split}
\sum_{j_l<j_{l-1}<k}\big [& (X^{k}_{(J^{'})j_l(\Lambda)}-X^{j_l}_{(J^{'})k(\Lambda)})-   (X^{k}_{(J^{''}j_l)j_{l-1}(\Lambda)}-
 X^{j_{l-1}}_{(J^{''}j_l)k(\Lambda)})\\+  
 & (X^{j_l}_{(J^{''}k)j_{l-1}(\Lambda)}-X^{j_{l-1}}_{(J^{''}k)j_l(\Lambda)})\big ]
 X^{(\Lambda)}dX^{j_l}\wedge dX^{j_{l-1}}\wedge dX^{k}\\
=\sum_{j_l<j_{l-1}<k}\big [& (X^{k}_{(J^{''})j_{l-1}(j_l\Lambda)}-X^{j_{l-1}}_{(J^{''})k(j_{l}\Lambda)})-  (X^{k}_{(J^{''})j_{l}(j_{l-1}\Lambda)}- 
X^{j_{l}}_{(J^{''})k(j_{l-1}\Lambda)})\\+ 
& (X^{j_{l-1}}_{(J^{''})j_{l}(k\Lambda)}-X^{j_{l}}_{(J^{''})j_{l-1}(k\Lambda)})\big ]X^{(\Lambda)}dX^{j_l}\wedge dX^{j_{l-1}}\wedge dX^{k}.
\end{split}
\end{equation}
Then the form
\begin{equation}
\begin{split}
\sum_{j_{l-2}<j_{l-1}<k}\big [& (X^{k}_{(J^{''})j_{l-1}(j_l\Lambda)}-X^{j_{l-1}}_{(J^{''})k(j_{l}\Lambda)})-  (X^{k}_{(J^{'''}j_{l-1})j_{l-2}(j_{l}\Lambda)}- 
X^{j_{l-2}}_{(J^{'''}j_{l-1})k(j_{l}\Lambda)})\\+ 
& (X^{j_{l-1}}_{(J^{'''}k)j_{l-2}(j_l\Lambda)}-X^{j_{l-2}}_{(J^{'''}k)j_{l-1}(j_l\Lambda)})\big ]X^{(j_l\Lambda)}dX^{j_{l-2}}\wedge dX^{j_{l-1}}\wedge dX^{k}
\end{split}
\end{equation}
is $d$-closed.
\end{lemma}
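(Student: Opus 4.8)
The plan is to verify $d$-closedness of the target $3$-form — call it $\Omega$ — by a direct computation in the graded algebra $\bigwedge^{\bullet}\langle dX^{i}\rangle\otimes\C[X^{i}]$, feeding in the hypothesis \eqref{W16} only at the end. First I would put the hypothesis into usable form. On each side of \eqref{W16} the monomial $X^{(\Lambda)}$ is a common factor and the $3$-forms $dX^{j_{l}}\wedge dX^{j_{l-1}}\wedge dX^{k}$ ($j_{l}<j_{l-1}<k$) are linearly independent over $\C[X^{i}]$, so \eqref{W16} is equivalent to a family of scalar identities, one per triple of indices, each equating a combination of the ``$J'$/$J''$'' symbols to a combination of the ``$J''$'' symbols in which one of the three free indices has migrated from a $dX$-slot into the symmetric $(\Lambda)$-block — this is the origin of the subscripts $(j_{l}\Lambda)$, $(j_{l-1}\Lambda)$, $(k\Lambda)$ on the right of \eqref{W16}, and this migration dictionary is the whole content of the hypothesis.

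Next I would differentiate $\Omega$. Writing $\Omega=\sum_{a<b<c}B_{abc}\,X^{(j_{l}\Lambda)}\,dX^{a}\wedge dX^{b}\wedge dX^{c}$, the graded Leibniz rule gives $d\Omega=\sum_{a<b<c}B_{abc}\,\bigl(\sum_{e}n_{e}\,X^{(j_{l}\Lambda)\setminus e}\bigr)\,dX^{e}\wedge dX^{a}\wedge dX^{b}\wedge dX^{c}$, with $n_{e}$ the multiplicity of $e$ in $j_{l}\Lambda$; collecting the coefficient of a fixed $dX^{p}\wedge dX^{q}\wedge dX^{r}\wedge dX^{s}$ produces, for each residual monomial, a four-term alternating sum of products $n_{e}B$. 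I would then split each $B$ into its three constituent pairs and handle them separately. The two pairs carrying the symmetric block $J'''$ recombine — after using the permutation invariance of $X^{\bullet}_{(\,\cdot\,)}$ to slide the summed index in and out of $J'''$ — into a Koszul-type telescoping sum that vanishes for the same reason that $d^{2}=0$. The remaining ``$J''$''-pair, together with the monomial-derivative factors, is precisely the left-hand member of the scalar form of \eqref{W16} read at the shifted level $l\mapsto l-1$, $\Lambda\mapsto j_{l}\Lambda$; substituting for it the corresponding right-hand ``$J'$'' combination via the hypothesis yields terms no longer involving the index $j_{l-2}$, so that the outer antisymmetrisation over $\{p,q,r,s\}$ kills them. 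Assembling the pieces, $d\Omega=0$. A useful bookkeeping shortcut is to repackage $\Omega$ from the start as $\sum_{a}dX^{a}\wedge\Psi_{a}$ for explicit $2$-forms $\Psi_{a}$ — legitimate because, modulo the symbol symmetries, each $B_{abc}$ is a three-term alternating sum over the ways of distributing $a,b,c$ among the distinguished single slot and the two symmetric blocks — so that $d\Omega=-\sum_{a}dX^{a}\wedge d\Psi_{a}$ and the hypothesis is used exactly to annihilate $\sum_{a}dX^{a}\wedge d\Psi_{a}$.

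Conceptually, Lemma~\ref{W22} says that the elementary equivalence used in Section~3 for the first prolongation — a form $f$ is $d$-exact iff $df=0$ — propagates verbatim to every higher prolongation, with \eqref{W16} serving as the inductive hypothesis. The only genuine difficulty is therefore organisational: one must keep straight the three competing partitions of each lower multi-index (distinguished single slot versus the symmetric $J$-block versus the symmetric $\Lambda$-block), reconcile the signs coming from reordering the three $dX$'s with the sign of the antisymmetric extension of $B$, and — the most delicate point — match those terms of $d\Omega$ in which the differentiated variable already occurs in $j_{l}\Lambda$ against the subscript migrations recorded on the right of \eqref{W16}. With one consistent convention fixed for the antisymmetric extensions and the multiplicities $n_{e}$, the cancellation is mechanical.
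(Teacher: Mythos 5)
Your plan has a genuine gap at the point where the hypothesis is invoked. You propose to isolate, inside $d\Omega$, ``the left-hand member of the scalar form of \eqref{W16} read at the shifted level $l\mapsto l-1$, $\Lambda\mapsto j_{l}\Lambda$'' and then to ``substitute for it the corresponding right-hand combination via the hypothesis''. But the hypothesis is the identity \eqref{W16} at the \emph{unshifted} level only; the same identity at the shifted level is, by the Remark following the lemma, \emph{equivalent} to the $d$-closedness you are trying to prove, so substituting the shifted right-hand side for the shifted left-hand side assumes the conclusion. Since the bracket of the target form is precisely the shifted left-hand side of \eqref{W16}, this is not a peripheral step: it is where the entire content of the lemma sits.

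A second concern is the claim that the two pairs carrying the block $J'''$ ``recombine into a Koszul-type telescoping sum that vanishes for the same reason that $d^{2}=0$''. That mechanism applies to a form whose polynomial factor is the monomial attached to the block out of which the alternated index is drawn; in the target form the alternation runs over the $J$-block (which of $j_{l-2},j_{l-1},k$ is adjoined to $J'''$) while the monomial is $X^{(j_{l}\Lambda)}$, on the $\Lambda$-side, so no part of $d\Omega$ telescopes for free. This mismatch is exactly why the hypothesis is needed at all: it transposes differentiation on the $\Lambda$-side into differentiation on the $J$-side. The paper's proof is organized around this point: one first checks (this is the ``tedious calculus'', and it is where \eqref{W16} enters, legitimately at the unshifted level) that the coefficients of $d\Omega$ coincide with those of the exterior derivative of the auxiliary form obtained from the left-hand side of \eqref{W16} by replacing the monomial $X^{(\Lambda)}$ by $X^{(J'')}$; that auxiliary form \emph{is} manifestly $d$ of a $2$-form, because there the monomial matches the alternated block, and so its derivative vanishes. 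Your proposal never introduces this auxiliary form, and without it the cancellation you describe cannot be completed noncircularly.
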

\begin{rem}The two forms in \ref{W16} are equal if and only if the form at left hand is $d$-closed.
\end{rem} 

\begin{proof} By elementary but tedious calculus, it is easy to check that the coefficients of the exterior derivative of the form defined in $45$ is exactly the coefficients of the exterior derivative of this form 
\begin{equation}
\begin{split}
\sum_{j_l<j_{l-1}<k}\big [& (X^{k}_{(J^{'})j_l(\Lambda)}-X^{j_l}_{(J^{'})k(\Lambda)})-   (X^{k}_{(J^{''}j_l)j_{l-1}(\Lambda)}-
 X^{j_{l-1}}_{(J^{''}j_l)k(\Lambda)})\\+  
 & (X^{j_l}_{(J^{''}k)j_{l-1}(\Lambda)}-X^{j_{l-1}}_{(J^{''}k)j_l(\Lambda)})\big ]
 X^{(J'')}dX^{j_l}\wedge dX^{j_{l-1}}\wedge dX^{k}.
 \end{split}
 \end{equation}
 On the other hand it is easy to see that the last form is equal to 
 \begin{equation}
 \begin{split}
d\big [ \sum_{j_{l-1}<k}(X^{k}_{(J^{'})j_l(\Lambda)}-X^{j_l}_{(J^{'})k(\Lambda)})
X^{(J')}dX^{j_{l}}\wedge dX^{k}\big ]
\end{split}
\end{equation} 
and so all the previous coefficients are zero.

\end{proof}

To compute the torsion of the prolongation of the system \ref{W8b}, we have essentially to solve the following equation with the given symmetric properties respect to the pairs of index for $p^a$:
\begin{equation}\label{W9c}
p^{k}_{i1\theta 0J1\Lambda 0}-p^{k}_{i0\theta 1J1\Lambda 0}+p^{i}_{k0\theta 1J1\Lambda0}-p^{i}_{k1\theta 0J1\Lambda0}+p^{\theta}_{i0k1J1\Lambda0}-p^{\theta}_{i1k0J_1\Lambda0}={\partial \varphi_{i\theta k}\over \partial z^{J1}\partial z^{\Lambda 0}}
\end{equation}
where $j_11\cdots j_l1$ and  $\lambda_10\cdots \lambda_m 0$ denoted by $J1$ and $\Lambda 0$.
\begin{rem}Recall that the torsion for the first prolongation system defined by $41$ is exactly the compatibility conditions to have integral element for this system. We know that the torsion for the initial system is exactly done by \ref{W11} and \ref{W12b} and so we have just to verify that the system \ref{W9c} has solutions under these assumptions. 
\end{rem}
 Going through the algebraization of the problem, we have to find numbers indexed by $J$ and $\Lambda$, with a appropriate properties of symmetry,  which satisfying:
\begin{equation}\label{W10b}
(Y^{k}_{(Ji)(\Lambda\theta)}-Y^{i}_{(Jk)(\Lambda\theta)})-(Y^{k}_{(J\theta)(\Lambda i)}-Y^{\theta}_{(Jk)(\Lambda i)})+(Y^{i}_{(J\theta)(\Lambda k)}-Y^{\theta}_{(Ji)(\Lambda k)})=Z^{i\theta k}_{(J)(\Lambda)}
\end{equation}
where $Z^{i\theta k}_{(J)(\Lambda)}={\partial \varphi_{i\theta k}\over \partial z^{J1}\partial z^{\Lambda 0}}$ is antisymmetric in $i,\theta, k$ and the parenthesis point out the symmetry in the multi-indices. The form
\begin{equation}\label{W13}
\sum_{i<\theta<k}Z^{i\theta k}_{(J)(\Lambda)}X^{(\Lambda)}dX^{i}\wedge dX^{\theta}\wedge dX^k
\end{equation}
is $d$-closed if \ref{W11} and \ref{W12b} are satified (it suffices to remark that the $d$ of this form is the derivatives with respect to $z^{\Lambda^{'}0}$ and $z^{J1}$ of \ref{W11} with $\lambda_m$ instead of $l$), so for fixing $J$, we can solve \ref{W10b} but perhaps without the symmetry with respect to $J$. Indeed we can symmetrise  with respect to $J$ and obtain finally:
\begin{equation}\label{W14}
(Y^{k}_{(J)i(\Lambda\theta)}-Y^{i}_{(J)k(\Lambda\theta)})-(Y^{k}_{(J)\theta(\Lambda i)}-Y^{\theta}_{(J)k(\Lambda i)})+(Y^{i}_{(J)\theta(\Lambda k)}-Y^{\theta}_{(J)i(\Lambda k)})=Z^{i\theta k}_{(J)(\Lambda)}.
\end{equation}
We point out  here that \ref{W14} has a solution if and only if \ref{W13} is $d$-closed and so it is a necessary condition to solve \ref{W10b}. The equation \ref{W14} is the first step of the construction now we have to obtain one more symmetry between $J$ and $i$, $J$ and $\theta$, $J$ and $k$. All the solution of \ref{W14} are deduced by the sum of the previous one and the following term: $Y^{k}_{(J)i(\Lambda \theta)}+X^{k}_{(J)(i\Lambda\theta)}$. We want to choose $X^{k}_{(J)(i\Lambda\theta)}$ such that 
\begin{equation}\label{W15}
Y^{k}_{(J)i(\Lambda \theta)}+X^{k}_{(J)(i\Lambda\theta)}-Y^{i}_{(J)k(\Lambda \theta)}-X^{i}_{(J)(k\Lambda\theta)}=Y^{k}_{(Ji)(\Lambda\theta)}-Y^{i}_{(Jk)(\Lambda\theta)}.
\end{equation}
It is enough to find $X^{k}_{(Ji)\Lambda\theta}$ which satisfy \ref{W15} and symmetrise with respect to $\Lambda\theta$. A necessary and sufficient condition  to get \ref{W15} is the following:
\begin{equation}\label{W17}
\sum_{i<k}\big ((Y^{k}_{(J)i(\Lambda \theta)}+X^{k}_{(J)(i\Lambda\theta)})-(Y^{i}_{(J)k(\Lambda \theta)}+X^{i}_{(J)(k\Lambda\theta)})\big )X^{(J)}dX^{i}\wedge dX^{k}=d\big (\sum Y^{k}_{(Ji)(\Lambda\theta)}X^{(Ji)}dX^{k}\big )
\end{equation}
which is equivalent to
\begin{equation}\label{W18}
\begin{split}
& (Y^{k}_{(J)i(\Lambda \theta)}-Y^{i}_{(J)k(\Lambda \theta)})-(Y^{k}_{(J^{'}i)j_l(\Lambda \theta)}-Y^{j_l}_{(J^{'}i)k(\Lambda \theta)})+(Y^{i}_{(J^{'}k)j_l(\Lambda \theta)}-Y^{j_l}_{(J^{'}k)i(\Lambda \theta)})\\
 = & - (X^{k}_{(J)(i\Lambda\theta)}-X^{i}_{(J)(k\Lambda\theta)})+(X^{k}_{(J^{'}i)(j_l\Lambda\theta)}-X^{j_l}_{(J^{'}i)(k\Lambda\theta)})-(X^{i}_{(J^{'}k)(j_l\Lambda\theta)}-X^{j_l}_{(J^{'}k)(i\Lambda\theta)}).
\end{split}
\end{equation}
The form at left hand of the equality is antisymmetric in $(i,j_l,k)$ and the form
\begin{equation}\label{W19}
\begin{split}
\sum_{i<j_l<k}\big ((Y^{k}_{(J)i(\Lambda \theta)}-Y^{i}_{(J)k(\Lambda \theta)})-(Y^{k}_{(J^{'}i)j_l(\Lambda \theta)}-Y^{j_l}_{(J^{'}i)k(\Lambda \theta)}) + & (Y^{i}_{(J^{'}k)j_l(\Lambda \theta)}-Y^{j_l}_{(J^{'}k)i(\Lambda \theta)})\big )\\ & X^{(\Lambda\theta)}dX^{i}\wedge dX^{j_l}\wedge dX^{k}
\end{split}
\end{equation}
is $d$-closed thanks to \ref{W11} and \ref{W12b}. So it is equal to 
\begin{equation}
d\big (\sum_{j_l<k}(X^{k}_{(J^{'})j_l(i\Lambda\theta)}-X^{j_l}_{(J^{'})k(i\Lambda\theta)})X^{(i\Lambda\theta)}
dX^{j_l}\wedge dX^{k}\big ),
\end{equation}
we solve for fixing $J^{'}$ and we symmetrise with respect to. With the help of \ref{W19}, we have \ref{W18} with $X^{k}_{(J^{'})j_l(i\Lambda\theta)}$ instead of $X^{k}_{(J)(i\Lambda\theta)}$. To override this commutation failure, we correct again in the following way: $X^{k}_{(J^{'})j_l(i\Lambda\theta)}+
Z^{k}_{(J^{'})(j_l i\Lambda\theta)}$
such that the form 
\begin{equation}\label{W20}
d\big (\sum_{j_l<k}\big ((X^{k}_{(J^{'})j_l(i\Lambda\theta)}+
Z^{k}_{(J^{'})(j_l i\Lambda\theta)})-(X^{j_l}_{(J^{'})k(i\Lambda\theta)}+
Z^{j_l}_{(J^{'})(k i\Lambda\theta)})\big )X^{(J^{'})}dX^{j_l}\wedge dX^{k}\big)=0.
\end{equation}
Using the same argument as to obtain \ref{W18} and \ref{W19}, to get \ref{W20} the following form must be $d$-closed:
\begin{equation}\label{W21}
\begin{split}
\sum_{j_l<j_{l-1}< k}\big ((& X^{k}_{(J^{'})j_l(i\Lambda\theta)} -X^{j_l}_{(J^{'})k(i\Lambda\theta)})-  (X^{k}_{(J^{''}j_l)
j_{l-1}(i\Lambda\theta)}  -X^{j_{l-1}}_{(J^{''}j_l)k(i\Lambda\theta)}) \\ & + (X^{j_l}_
{(J^{''}k)j_{l-1}(i\Lambda\theta)}-X^{j_{l-1}}_{(J^{''}k)j_l(i\Lambda\theta)})\big )
 X^{(i\Lambda\theta)}dX^{j_{l}}\wedge dX^{j_{l-1}}\wedge dX^{k}.
\end{split}
\end{equation}
Using lemma \ref{W22} with $\tilde{J}=(J,i)$ and $\tilde{\Lambda}=(\Lambda,\theta)$, we obtain without difficulties that \ref{W21} is $d$-closed (it suffices to remark that the $d$ of this form is the derivatives  with respect to $z^{J^{'}1}$ and $z^{\Lambda 0}$ of \ref{W12b} with $j_l$ instead of $l$) and so we get a form $Z^{k}_{(J^{''})j_{l-1}(j_l i\Lambda\theta)}$ instead of $Z^{k}_{(J^{'})(j_l i\Lambda\theta)}$  . We can use this process up to obtain $Z^{k}_{j_1j_2(j_3j_4\cdots j_li\Lambda\theta)}$ and we modify again by a form $Z^{k}_{j_1(j_2\cdots j_li\Lambda\theta)}$. Now the last form has no commutation failure so we get the identity:
\begin{equation}\label{W23}
\begin{split}
& (Z^{k}_{j_1j_2(j_3j_4\cdots j_li\Lambda\theta)}+Z^{k}_{j_1(j_2\cdots j_li\Lambda\theta)})-(Z^{j_2}_{j_1 k(j_3j_4\cdots j_li\Lambda\theta)}+Z^{j_2}_{j_1(k j_3\cdots j_li\Lambda\theta)})\\ &\ \ \ \  =  Z^{k}_{(j_{1}j_{2})(j_3j_4\cdots j_li\Lambda\theta)}-Z^{j_2}_{(j_{1}k)(j_3j_4\cdots j_li\Lambda\theta)}.
\end{split}
\end{equation}
We can modify now $Z^{k}_{(j_1j_2)j_3(j_4\cdots j_li\Lambda\theta)}$ by $Z^{k}_{(j_1j_2)j_3(j_4\cdots j_li\Lambda\theta)}$+$Z^{k}_{(j_1j_2)(j_3j_4\cdots j_li\Lambda\theta)}$ to get a form 

$Z^{k}_{(j_1j_2j_3)(j_4\cdots j_li\Lambda\theta)}$. So we can go back up to the term $Y^{k}_{(J)i(\Lambda\theta)}$ that we will be change by $Y^{k}_{(J)i(\Lambda\theta)}+Z^{k}_{(J)(i\Lambda\theta)}$ such that 
\begin{equation}
\begin{split}
(Y^{k}_{(J)i(\Lambda\theta)}+Z^{k}_{(J)(i\Lambda\theta)})-(Y^{i}_{(J)k(\Lambda\theta)}+Z^{i}_{(J)(k\Lambda\theta)}) 
= Y^{k}_{(Ji)(\Lambda\theta)}-Y^{i}_{(Jk)(\Lambda\theta)}
\end{split}
\end{equation}
and finally $Y^{k}_{(Ji)(\Lambda\theta)}$ is a solution of \ref{W10b}.
\begin{rem}Indeed all arguments above are still available in $\C^{4n}$,  we have the torsion's system in the general case.
\end{rem}

\section{Torsion and Complex associated to the kernel of a partial differential system of order one with constant coefficients}
In this section, we consider a linear homogeneous system of PDE with constant coefficients of order one denoted by $A$: $\sum a^{m}_{ij}P^{i}_{j}=0$ with $1\leq m\leq \alpha$, $1\leq i\leq \beta$, $1\leq j\leq n$ and the standard notations $P^{i}_j:={\partial P^{i}\over \partial x_{j}}$. We recall that $A^0$ is the set of 1-jets solutions of $A$ and  $A^1$ is the set of 2-jets such that $\sum a^{m}_{ij}P^{i}_{lj}=0$ for all $l$. 
\begin{defn} We say that the sequence of $1$-jets $(P^{i}_{1j})_{j\geq 1}, (P^{i}_{2j})_{j\geq 2}, \cdots, (P^{i}_{kj})_{j\geq k}$ is k-regular if and only if $(P^{i}_{1j}) \in A^0$ and 
$$a^{m}_{i1}P^{i}_{1l}+\cdots +a^{m}_{i(l-1)}P^{i}_{(l-1)l}=-\sum_{j\geq l}a^{m}_{ij}P^{i}_{lj},$$
for all $2\leq l\leq k$.
\end{defn}
\begin{rem}We can adapt easily the previous definition for $A$ linear partial differential system with constant coefficients for which the matrix of total symbol contains only homogeneous polynomials of order $\gamma$. The previous definition depends of the coordinates but it becomes coordinates free if we consider only generic coordinates (see \cite{BCGGG}, pp 119), it will be more clear in the following.
\end{rem}
\begin{defn}We say that $A$ is in involution if and only if all $k$-regular sequel can be extended by a $k+1$-regular sequel. More precisely : if $(P^{i}_{1j})_{j\geq 1}, (P^{i}_{2j})_{j\geq 2}, \cdots, (P^{i}_{kj})_{j\geq k}$ is a $k$-regular sequence, there exists $(P^{i}_{(k+1)j})_{j\geq k+1}$ such that $(P^{i}_{1j})_{j\geq 1}, (P^{i}_{2j})_{j\geq 2}, \cdots, (P^{i}_{kj})_{j\geq k},(P^{i}_{(k+1)j})_{j\geq k+1}$ is $k+1$-regular.
\end{defn}
We will see in the next proposition that the involution in the previous sense, is exactly the same than the involution of the tableau associated to $A$ in the sense of Cartan (see the definition below). So generic coordinates for this notion of involution is the same than generic coodinates for Cartan's tableau involution.
\begin{prop}$A$ is in involution if and only if the tableau associated to $A$ is in involution in the sense of Cartan.
\end{prop}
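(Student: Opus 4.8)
The plan is to collapse both notions of involution into a single numerical equality about the Cartan characters of the tableau, and then quote the classical Cartan test. Throughout we fix generic coordinates $x_{1},\dots,x_{n}$ (equivalently a generic basis $e_{1},\dots,e_{n}$ of $V=\R^{n}$), which, as the paper recalls, are simultaneously generic for the tableau of $A$ and for the coordinate-free version of $k$-regularity. Write $W=\R^{\beta}$ and let $\mathcal{A}\subset V^{*}\otimes W$ be the tableau of $A$, i.e. the space of matrices $(P^{i}_{j})$ with $\sum_{i,j}a^{m}_{ij}P^{i}_{j}=0$ for all $m$; this is canonically $A^{0}$ modulo its $0$-jet part. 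I would set
\[
\mathcal{A}_{k}=\{\,B\in\mathcal{A}\ :\ B(e_{1})=\dots=B(e_{k})=0\,\},\qquad \mathcal{A}_{0}=\mathcal{A},
\]
with Cartan characters $s_{k}=\dim\mathcal{A}_{k-1}-\dim\mathcal{A}_{k}$, so that $\dim\mathcal{A}_{k}=\sum_{i>k}s_{i}$ and $s_{1}\ge s_{2}\ge\dots\ge s_{n}$ in generic coordinates. Recall also the prolonged tableau $\mathcal{A}^{(1)}=(S^{2}V^{*}\otimes W)\cap(V^{*}\otimes\mathcal{A})$, consisting of the symmetric arrays $(P^{i}_{jk})$ all of whose slices lie in $\mathcal{A}$; forgetting the second derivatives identifies it with the kernel of the (surjective) restriction $A^{1}\to A^{0}$, so $\dim\mathcal{A}^{(1)}=\dim A^{1}-\dim A^{0}$. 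The input I would invoke from \cite{BCGGG} is the Cartan test: one always has $\dim\mathcal{A}^{(1)}\le\sum_{k=1}^{n}k\,s_{k}$, with equality if and only if the tableau of $A$ is involutive in the sense of Cartan.

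The second step is to reinterpret $k$-regular sequences. Using the symmetry $P^{i}_{lj}=P^{i}_{jl}$, the $l$-th relation in the definition of $k$-regularity is exactly the membership $B_{l}\in\mathcal{A}$, where $B_{l}=(P^{i}_{lj})_{j}$: the columns $j\ge l$ of $B_{l}$ are the free data, while the first $l-1$ columns are forced by the earlier slices through $(B_{l})^{i}_{r}=(B_{r})^{i}_{l}$ for $r<l$. Hence a $k$-regular sequence is precisely a tuple $(B_{1},\dots,B_{k})$, each $B_{l}\in\mathcal{A}$, satisfying these compatibility identities; in particular an $n$-regular sequence is exactly (the array of slices of) an element of $\mathcal{A}^{(1)}$, all of whose symmetry relations are now present. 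Let $N_{k}$ denote the dimension of the space of $k$-regular sequences (the inessential $0$-jet constants $P^{i}$ being discarded). Then $N_{1}=\dim\mathcal{A}=\dim\mathcal{A}_{0}$ and $N_{n}=\dim\mathcal{A}^{(1)}$.

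The third step is the count. I would consider the forgetful linear map $p_{k}$ from the space of $(k+1)$-regular sequences to the space of $k$-regular ones. A short computation with the relations shows its kernel is exactly $\mathcal{A}_{k}$: imposing $B_{1}=\dots=B_{k}=0$ forces (via the identities above) $B_{k+1}\in\mathcal{A}$ with its first $k$ columns zero. By rank--nullity,
\[
N_{k+1}=\dim(\operatorname{im}p_{k})+\dim\mathcal{A}_{k}\ \le\ N_{k}+\dim\mathcal{A}_{k},
\]
with equality if and only if $p_{k}$ is surjective, i.e. if and only if every $k$-regular sequence extends to a $(k+1)$-regular one. By definition $A$ is in involution precisely when this holds for all $1\le k\le n-1$ (the step $k=n$ is vacuous). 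Telescoping from $k=1$ to $k=n-1$,
\[
N_{n}\ \le\ \sum_{k=0}^{n-1}\dim\mathcal{A}_{k}\ =\ \sum_{k=0}^{n-1}\sum_{i>k}s_{i}\ =\ \sum_{i=1}^{n}i\,s_{i},
\]
with equality if and only if equality holds at every step, i.e. if and only if $A$ is in involution. Since $N_{n}=\dim\mathcal{A}^{(1)}$, this says that $A$ is in involution if and only if $\dim\mathcal{A}^{(1)}=\sum_{k=1}^{n}k\,s_{k}$, which by the Cartan test is exactly involutivity of the tableau of $A$.

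The genuinely delicate part is the bookkeeping of the second and third steps: one must check, directly from $P^{i}_{lj}=P^{i}_{jl}$, that after fixing $B_{1},\dots,B_{l-1}$ the $l$-th regularity equation collapses to the single condition $B_{l}\in\mathcal{A}$, and that once $B_{1},\dots,B_{k}$ vanish the only surviving freedom in $B_{k+1}$ is $\mathcal{A}_{k}$. One should also record explicitly that the generic coordinates in which the $s_{k}$ are defined and decreasing coincide with the generic coordinates in which $k$-regularity is coordinate-free, so that both notions are being compared in one common frame. Everything else is the rank--nullity telescoping, the only combinatorial input being the identity $\sum_{k=0}^{n-1}\dim\mathcal{A}_{k}=\sum_{k=1}^{n}k\,s_{k}$.
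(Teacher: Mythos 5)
Your proof is correct and follows essentially the same route as the paper: both arguments reduce Cartan involutivity to the dimension identity $\dim\mathcal{A}^{(1)}=\sum_{k=0}^{n-1}\dim\mathcal{A}_{k}$ (the paper's $\dim A^1=\dim A^0+\dim A^0_1+\cdots+\dim A^0_{n-1}$), obtain the inequality $\leq$ by slicing a prolongation element into rows whose first columns are forced by the earlier slices, and identify the equality case with the extendability of $k$-regular sequences. Your version merely makes explicit, via the forgetful maps $p_k$ and rank--nullity, the counting that the paper compresses into ``it is obvious that always $\dim A^1\leq\dim A^{0}+\dim A^{0}_{1}+\cdots+\dim A^{0}_{n-1}$.''
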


\begin{proof}A tableau is in involution in the sense of Cartan if and only if $dim A^1=dim A^{0}+dim A^{0}_{1}+\cdots +dim A^{0}_{n-1}$ where $A^{0}_{j}$ is the set of one jets in the variables $x_{j+1},\cdots , x_n$ solutions of $A$. If $(P^{i}_{lj})_{l,j\geq 1}$ a $2$-jet is in $A^{1}$ then 
$$a^{m}_{ij}P^{i}_{1j}=0,\ a^{m}_{ij}P^{i}_{2j}=0,\cdots ,\ a^{m}_{ij}P^{i}_{nj}=0,$$
with the usual notation: if an index is repeated then we sum with respect to it.
Using the last equalities, we deduce that $(P^{i}_{lj})\in A^{1}$ implies that $(P^{i}_{1j})$ is in $A^{0}$ and $$a^{m}_{i1}P^{i}_{1l}+\cdots + a^{m}_{i(l-1)}P^{i}_{(l-1)l}$$ is in the image of the endomorphism defined by $\sum_{j\geq l}a^{m}_{ij}P^{i}_{lj}$ denoted by $A^{[0]}_{l-1}$ for all $2\leq l\leq n$. Now it is obvious that always
$$dim A^1\leq dim A^{0}+dim A^{0}_{1}+\cdots +dim A^{0}_{n-1}.$$ On the other hand, the equality $dim A^1=dim A^{0}+dim A^{0}_{1}+\cdots +dim A^{0}_{n-1}$ is  obtained when all $l$-regular sequences of jets, $(P^{i}_{1j})_{j\geq 1}, (P^{i}_{2j})_{j\geq 2}, \cdots, (P^{i}_{lj})_{j\geq l}$, can be extended in a $l+1$-regular sequence of jets for all $2\leq l\leq n-1$.
\end{proof}
\begin{rem}The last proposition says exactly that we can construct all the 2-jets in $A^1$ only with the help of any $1$-jets in $A^{0}$ which is completed like in the previous proposition. Clearly this proposition can be adapted mutatis mutandis if $A^{p}$ is in involution with $p>0$.
\end{rem}
Let us consider, $\mathcal{A}$ and $\mathcal{B}$ two partial differential operators of order one with constant coefficients:
$$\sum a^{m}_{ij}P^{i}_{j},\ 1\leq m\leq \alpha ,\  1\leq i\leq \beta ,\ 1\leq j\leq n\ \ (\mathcal{A}),$$
$$\sum b^{m}_{ij}Q^{i}_{j},\ 1\leq m\leq \gamma,\ 1 \leq i\leq \alpha,\ 1\leq j\leq n\ \ (\mathcal{B}).$$
The operators $\mathcal{A}$ and $\mathcal{B}$ induce two endomorphisms on the sets of one jets which, by abuse of notations, we denote by $\mathcal{A}$ and $\mathcal{B}$ too. Similary the operator $\mathcal{A}$ induces an endomorphism denoted by $\mathcal{A}^{1}$ on $2$-jets which is obviously defined by
$$\sum a^{m}_{ij}P^{i}_{lj}\ \ \ l=1,\cdots,n.$$ We can define $\mathcal{A}^{q}$ in the same way. Suppose that the following sequence of endomorphisms is exact:
$$S^{\beta}_{2n}\overset{\mathcal{A}^{1}}{\rightarrow}S^{\alpha}_{1n}\overset{\mathcal{B}}{\rightarrow}S^{\gamma}_{0n}, $$
where $S^{i}_{ln}$ are the sets of $i$-vectors valued jets of order $l$ in $n$-variables. We want to show that the involution of $A$ (the PDE system associated to $\mathcal{A}$) is a hereditary property by the previous exact sequence. First, we etablish this lemma:
\begin{lemma}Suppose that the tableau associated to $A$ is in involution and 
$$ S^{\beta}_{2n}\overset{\mathcal{A}^{1}}{\rightarrow}S^{\alpha}_{1n}\overset{\mathcal{B}}{\rightarrow}S^{\gamma}_{0n} $$ is an exact sequence, then the tableau associated to $B$ (the PDE system associated to $\mathcal{B}$) is in involution if and only if 
$$dim B^{0}_j=dim S^{\beta}_{2(n-j)}-dim A^{1}_j\ \ \forall 1\leq j\leq n-1.$$
\end{lemma}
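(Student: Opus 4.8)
The plan is to reduce everything to Cartan's numerical test, express the two sides of the claimed equivalence through the exactness of the given sequence and of its restrictions to coordinate subspaces, and then balance the restricted exactness against the Cartan inequality for $B$.

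First I would record the reformulations. By the Proposition above, $B$ is in involution if and only if $\dim B^{1}=\sum_{j=0}^{n-1}\dim B^{0}_{j}$ (with $B^{0}=B^{0}_{0}$), and one always has the inequality $\dim B^{1}\le\sum_{j=0}^{n-1}\dim B^{0}_{j}$. Exactness of the given sequence at $S^{\alpha}_{1n}$ gives $B^{0}=\tmop{im}\mathcal{A}^{1}$ and $\ker\mathcal{A}^{1}=A^{1}$, hence $\dim B^{0}=\dim S^{\beta}_{2n}-\dim A^{1}$: the claimed identity is automatic for $j=0$, so it holds for all $1\le j\le n-1$ iff it holds for all $0\le j\le n-1$. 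For each such $j$, restrict the three symbol maps to the last $n-j$ coordinates $x_{j+1},\dots,x_{n}$; one gets a complex $S^{\beta}_{2(n-j)}\xrightarrow{\mathcal{A}^{1}}S^{\alpha}_{1(n-j)}\xrightarrow{\mathcal{B}}S^{\gamma}_{0(n-j)}$ in which $\ker\mathcal{A}^{1}=A^{1}_{j}$, and whose middle homology $H_{j}$ satisfies $\dim H_{j}=\dim B^{0}_{j}-(\dim S^{\beta}_{2(n-j)}-\dim A^{1}_{j})\ge 0$, with $H_{0}=0$.

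I would then sum over $j$. Two dimension identities are needed: the combinatorial one $\sum_{j=0}^{n-1}\dim S^{\beta}_{2(n-j)}=\dim S^{\beta}_{3n}$ (a symmetric cubic monomial is its smallest variable times a symmetric quadratic monomial in the remaining variables), and $\sum_{j=0}^{n-1}\dim A^{1}_{j}=\dim A^{2}$, which is Cartan's test for the tableau $A^{1}$ and is legitimate because $A^{1}$, being the first prolongation of the tableau of $A$, is again in involution whenever that tableau is (\cite{BCGGG}). Summing yields $\sum_{j=0}^{n-1}\dim B^{0}_{j}=\dim S^{\beta}_{3n}-\dim A^{2}+\sum_{j=0}^{n-1}\dim H_{j}$, so the claimed identity (for all $j$) is equivalent to $H_{j}=0$ for all $j$, i.e. to the exactness of each restricted sequence, i.e. to $\sum_{j}\dim B^{0}_{j}=\dim S^{\beta}_{3n}-\dim A^{2}$.

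It remains to connect $\dim B^{1}$ to $\dim S^{\beta}_{3n}-\dim A^{2}$. Prolonging the relation $\mathcal{B}\circ\mathcal{A}^{1}=0$ (valid since $\tmop{im}\mathcal{A}^{1}\subset\ker\mathcal{B}$) gives $\mathcal{B}^{1}\circ\mathcal{A}^{2}=0$, hence $\tmop{im}\mathcal{A}^{2}\subset\ker\mathcal{B}^{1}=B^{1}$ and $\dim B^{1}\ge\dim S^{\beta}_{3n}-\dim A^{2}$; together with $\dim B^{1}\le\sum_{j}\dim B^{0}_{j}$ this already gives one implication: if all $H_{j}=0$ the two inequalities pinch $\dim B^{1}$ to $\dim S^{\beta}_{3n}-\dim A^{2}=\sum_{j}\dim B^{0}_{j}$, so $B$ is in involution. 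For the reverse implication I would prove the converse inclusion $\ker\mathcal{B}^{1}=\tmop{im}\mathcal{A}^{2}$ (exactness of the prolonged sequence), which gives $\dim B^{1}=\dim S^{\beta}_{3n}-\dim A^{2}$ unconditionally; then $B$ in involution forces $\sum_{j}\dim H_{j}=0$, i.e. the claimed identity. This last point, stability of exactness under prolongation, is the only real work and is exactly where the involutivity of $A$ is used: the obstruction to lifting an element of $\ker\mathcal{B}^{1}$ through $\mathcal{A}^{2}$ is a Spencer $\delta$-cohomology class of the tableau of $A$ in positive degree, and these vanish for an involutive tableau; equivalently, in the language of Section 4, one lifts the $l$-regular sequences of jets for $B$ to $l$-regular sequences for $A$ and uses that the latter all extend. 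Everything else is bookkeeping with the Cartan test and the two dimension identities.
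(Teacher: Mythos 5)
Your proof is correct and follows essentially the same route as the paper's: Cartan's numerical test for $B$, the inequality $\dim B^{0}_{j}\geq \dim S^{\beta}_{2(n-j)}-\dim A^{1}_{j}$ from the restricted sequences, the combinatorial identity $\sum_{j=0}^{n-1}\dim S^{\beta}_{2(n-j)}=\dim S^{\beta}_{3n}$, Cartan's test for the prolongation of $A$, and the key fact $\dim B^{1}=\dim S^{\beta}_{3n}-\dim A^{2}$. Your only additions are refinements of the same argument: packaging the defects as homology groups $H_{j}$, noting that the ``if'' direction needs only the inclusion $\tmop{im}\mathcal{A}^{2}\subset\ker\mathcal{B}^{1}$, and supplying (correctly, via $2$-acyclicity of involutive tableaux) a justification for the exactness of the prolonged symbol sequence, which the paper asserts in a single sentence.
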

\begin{rem}In fact, the operator $\mathcal{B}$ is so called the torsion of the system $A$ because by definition of the torsion, the sequence
$$S^{\beta}_{2n}\overset{\mathcal{A}^{1}}{\rightarrow}S^{\alpha}_{1n}\overset{\mathcal{B}}{\rightarrow}S^{\gamma}_{0n} $$
is exact. But in general, the associated sequence of PDE system is not exact, because the tableau associated to $A$ is not necessary in involution.
\end{rem}
\begin{proof}The tableau $A$ is in involution so a jet in $S^{\alpha}_{2n}$ is in $Im(\mathcal{A}^2)$ if and only if it is in $B^{1}=ker(\mathcal{B}^1)$ and therefore we have the following equality $dim B^{1}=dim S^{\beta}_{3n}-dim A^{2}$. On the other hand, if the tableau associated to $A$ is in involution, $dim A^{2}=dim A^{1}+dim A^{1}_{1}+\cdots +dim A^{1}_{n-1}$. Clearly we have $dim B^{0}=dim ker(\mathcal{B})=dim S^{\beta}_{2n}-dim A^{1}$ and $dim B^{0}_j\geq dim S^{\beta}_{2(n-j)}-dim A^{1}_j$ for $j\geq 1$. If the equalities hold for all $j\geq 1$, we have
$$dim B^{1}=dim S^{\beta}_{3n}-dim S^{\beta}_{2n}-\sum_{j=1}^{n-1}dim S^{\beta}_{2(n-j)}+dim B^{0} +\sum_{j=1}^{n-1}dim B^{0}_j.$$
But elementary calculation gives 
$$dim S^{\beta}_{3n}-dim S^{\beta}_{2n}-\sum_{j=1}^{n-1}dim S^{\beta}_{2(n-j)}=0,$$
and therefore the tableau associated to $B$ is in involution. Conversely if there exists $j$ such that $dim B^{0}_j> dim S^{\beta}_{2(n-j)}-dim A^{1}_j$ then
$$dim B^{1}< dim B^{0} +\sum_{j=1}^{n-1}dim B^{0}_j.$$
\end{proof}
We can now prove the hereditary property for the involution
\begin{prop}Under the assumptions of the previous lemma, the tableau associated to $B$ is in involution.
\end{prop}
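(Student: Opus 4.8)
The plan is to derive the Proposition from the Lemma just established. Since $A$ is in involution and the sequence $S^{\beta}_{2n}\overset{\mathcal{A}^{1}}{\rightarrow}S^{\alpha}_{1n}\overset{\mathcal{B}}{\rightarrow}S^{\gamma}_{0n}$ is exact, it suffices to check the numerical identities $\dim B^{0}_{j}=\dim S^{\beta}_{2(n-j)}-\dim A^{1}_{j}$ for all $1\le j\le n-1$ (here $A^{1}_{j}$ is, in analogy with $A^{0}_{j}$, the set of $2$-jets in the variables $x_{j+1},\dots,x_{n}$ solving the prolonged system $A^{1}$). Because $\mathcal{A}$ and $\mathcal{B}$ have constant coefficients, they restrict to operators acting on jets in $x_{j+1},\dots,x_{n}$ only, so the exact sequence above restricts to a complex $S^{\beta}_{2(n-j)}\overset{\mathcal{A}^{1}}{\rightarrow}S^{\alpha}_{1(n-j)}\overset{\mathcal{B}}{\rightarrow}S^{\gamma}_{0(n-j)}$ in which $\ker\mathcal{B}=B^{0}_{j}$ and $\ker\mathcal{A}^{1}=A^{1}_{j}$ by definition; hence $\operatorname{im}\mathcal{A}^{1}$ has dimension $\dim S^{\beta}_{2(n-j)}-\dim A^{1}_{j}$, and the wanted identity is exactly the exactness of this restricted complex at its middle term. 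The inclusion $\operatorname{im}\mathcal{A}^{1}\subset\ker\mathcal{B}$ is automatic since $\mathcal{B}\circ\mathcal{A}^{1}=0$ holds identically, and the proof of the Lemma already records $\dim B^{0}_{j}\ge\dim S^{\beta}_{2(n-j)}-\dim A^{1}_{j}$; so all that remains is the reverse inclusion $\ker\mathcal{B}\subset\operatorname{im}\mathcal{A}^{1}$ in the restricted complex.

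This reverse inclusion is a statement about $\mathcal{A}$ alone, and does not involve $\mathcal{B}$. Given $v\in B^{0}_{j}$, exactness of the unrestricted sequence produces $P\in S^{\beta}_{2n}$ with $\mathcal{A}^{1}(P)=v$; the claim is that such a $P$ may be chosen to involve only the variables $x_{j+1},\dots,x_{n}$, which then gives $v\in\operatorname{im}\big(\mathcal{A}^{1}|_{S^{\beta}_{2(n-j)}}\big)$ as required. I would prove this by stripping from $P$ its dependence on $x_{1},\dots,x_{j}$ one variable at a time: if some $x_{r}$ with $r\le j$ still occurs in $P$, subtract a suitable element of $\ker\mathcal{A}^{1}=A^{1}$ that cancels the $x_{r}$-part of $P$ without altering $\mathcal{A}^{1}(P)=v$ (which, since $v\in B^{0}_{j}$, involves none of $x_{1},\dots,x_{j}$ to begin with). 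The existence of such an element of $A^{1}$ at each step is precisely what involutivity of $A$ provides: working in generic ($\delta$-regular) coordinates, the Proposition above — and the Remark following it — says that every $l$-regular sequence of $1$-jets of $A^{0}$ extends to an $(l+1)$-regular one, equivalently that all of $A^{1}$ is obtained from $1$-jets of $A^{0}$ by the completion procedure used there, and that is exactly the freedom needed to perform the cancellation. An equivalent route is bookkeeping with Hilbert–Poincar\'e series: the exact sequence expresses the series of $B$ in terms of those of $A$ and of $S^{\bullet}$, and involutivity of $A$ forces that series into the shape that, by the Lemma, characterizes involutivity of $B$.

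The main obstacle is exactly the correction step of the previous paragraph — that is, showing the lift $P$ can be pushed down to the variables $x_{j+1},\dots,x_{n}$. For a general first-order system with constant coefficients, $\operatorname{im}\mathcal{A}^{1}\cap S^{\alpha}_{1(n-j)}$ is strictly larger than $\mathcal{A}^{1}\big(S^{\beta}_{2(n-j)}\big)$, the identity $\dim B^{0}_{j}=\dim S^{\beta}_{2(n-j)}-\dim A^{1}_{j}$ fails for some $j$, and then by the Lemma the tableau of $B$ is not in involution — which is how a genuinely higher-order operator enters the associated complex. So the entire weight of the Proposition lies in this interplay between involutivity of $A$ and restriction to subsets of coordinates; once it is set up as above, the remaining checks (the explicit completion of a $1$-jet, the bookkeeping of the symmetrizations of the indices) are routine and run parallel to the proofs of the Proposition and Lemma above. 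This is also, as announced in the introduction, why the failure of this involutivity somewhere along the Cauchy–Fueter tower in $\R^{8}$ is precisely what prevents that complex from consisting of first-order operators only.
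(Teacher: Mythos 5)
Your proposal is correct and follows essentially the same route as the paper: reduce via the lemma to the identities $\dim B^{0}_{j}=\dim S^{\beta}_{2(n-j)}-\dim A^{1}_{j}$, and prove the remaining inclusion $B^{0}_{j}\subset \mathcal{A}^{1}\big(S^{\beta}_{2(n-j)}\big)$ by correcting a lift of a $1$-jet in $B^{0}_{j}$ one variable at a time, the correcting element of $A^{1}=\ker\mathcal{A}^{1}$ being supplied by the regular-sequence extension property (Definition 4.3 and Proposition 4.4) that encodes involutivity of $A$ in generic coordinates. The row-by-row construction you defer as routine bookkeeping is exactly the inductive computation the paper carries out explicitly in its $k=2$ case (the successive jets ${}^{(l-1)}\!P^{i}_{lj}$ are the rows of your correcting element of $A^{1}$), followed by restriction to the plane of $x_{k-1},\dots,x_{n}$ for general $k$.
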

 \begin{proof}According to the lemma 4.6, we have to prove that a $1$-jet $(\theta^{m}_{l})\in \mathcal{B}^{0}=ker(\mathcal{B})=Im(\mathcal{A}^{1})$ with $(\theta^{m}_l)=0$ for all $ 1\leq l<k$, is the image of  a $2$-jet in $\mathcal{A}^1$, $\tilde P^{m}_{lj}$, with $\tilde P^{m}_{lj}=0$ for all $1\leq l\ \hbox{and}\ j<k$.
 \bigskip
 
- Suppose $k=2$, we have $\sum a^{m}_{ij}P^{i}_{lj}=\theta^{m}_{l}$ for all $l\geq 2$, $\sum a^{m}_{ij}P^{i}_{1j}=0$, and therefore 
 $$a^{m}_{i1}P^{i}_{21}+\sum_{j\geq 2 }a^{m}_{ij}P^{i}_{2j}=\theta^{m}_2,$$
 which is the same thing, thanks to the commutating properties of $2$-jets
 
 $$a^{m}_{i1}P^{i}_{12}+\sum_{j\geq 2 }a^{m}_{ij}P^{i}_{2j}=\theta^{m}_2.$$
 Using the proposition 4.4 and the definition 4.3, we obtain the existence of a one jet 
 $({}^1\!P^{i}_{2j})_{j\geq 2}$ such that
 $$-\sum_{j\geq 2}a^{m}_{ij}{}^1\!P^{i}_{2j}+\sum_{j\geq 2 }a^{m}_{ij}P^{i}_{2j}=\theta^{m}_2,$$
 that is to say
  $$\sum_{j\geq 2}a^{m}_{ij}\big(-{}^1\!P^{i}_{2j}+P^{i}_{2j}\big)=\theta^{m}_2.$$
So  we put $(\tilde P^{i}_{2j})_{j\geq 2}:=(-{}^1\!P^{i}_{2j}+P^{i}_{2j})_{j\geq 2}$ and we want to construct a $1$-jet $(\tilde P^{m}_{3j})_{j\geq 3}$ with the appropriate commutating properties with respect to $(\tilde P^{m}_{2j})_{j\geq 2}$. We start with the equality 
$$a^{m}_{i1}P^{i}_{31}+\sum_{j\geq 2}a^{m}_{ij}P^{i}_{3j}=\theta^{m}_{3},$$
which can be write obviously
$$a^{m}_{i1}P^{i}_{13}+a^{m}_{i2}{}^1\!P^{i}_{23}-a^{m}_{i2}{}^1\!P^{i}_{23}+\sum_{j\geq 2}a^{m}_{ij}P^{i}_{3j}=\theta^{m}_{3}.$$ With the help of proposition 4.4 and the definition 4.3, we get 
$$-\sum_{j\geq 3}a^{m}_{ij}{}^2\!P^{i}_{3j}-a^{m}_{i2}{}^1\!P^{i}_{23}+a^{m}_{i2}P^{i}_{32}+\sum_{j\geq 3}a^{m}_{ij}P^{i}_{3j}=\theta^{m}_ {3},$$ and finally
$$-a^{m}_{i2}{}^1\!P^{i}_{23}+a^{m}_{i2}P^{i}_{32}+\sum_{j\geq 3}a^{m}_{ij}\big(-{}^2\!P^{i}_{3j}+P^{i}_{3j}\big)=a^{m}_{i2}\tilde P^{i}_{23}+\sum_{j\geq 3}a^{m}_{ij}\big(-{}^2\!P^{i}_{3j}+P^{i}_{3j}\big)=\theta^{m}_3$$ and therefore we have the commutating properties needed, if we put $ (\tilde P^{i}_{3j})_{j\geq 3}:=\big(-{}^2\!P^{i}_{3j}+P^{i}_{3j}\big)_{j\geq 3}$. Suppose that we have choosen in a similar way  $(\tilde P^{i}_{kj})_{j\geq k}:=(-{}^{(k-1)}\!P^{i}_{kj}+P^{i}_{kj})_{j\geq k}$ for all $k\leq l$, we want to construct a $1$-jet $(\tilde P^{i}_{(l+1)j})_{j\geq l+1}$ with the required commutating properties with respect to $(\tilde P^{i}_{kj})_{j\geq k}$ for all $k\leq l$. We start with the equality 
$$a^{m}_{ij}P^{i}_{(l+1)j}=a^{m}_{i1}P^{i}_{1(l+1)}+
\sum_{j=2}^{l}a^{m}_{ij}{}^{(j-1)}\!P^{i}_{j(l+1)}-\sum_{j=2}^{l}a^{m}_{ij}{}^{(j-1)}\!P^{i}_{j(l+1)}+\sum_{j\geq 2}a^{m}_{ij}P^{i}_{4j}=\theta^{m}_{l+1},$$
we use again proposition 4.4 and the definition 4.3 and we get:
$$-\sum_{j\geq l+1}a^{m}_{ij}{}^l\!P^{i}_{(l+1)j}-\sum_{j=2}^{l}a^{m}_{ij}{}^{(j-1)}\!P^{i}_{j(l+1)}+\sum_{j=2}^{l}a^{m}_{ij}P^{i}_{(l+1)j}+\sum_{j\geq l+1}a^{m}_{ij}P^{i}_{(l+1)j}=\theta^{m}_{l+1}$$
which can be written 
$$\sum_{j\geq l+1}a^{m}_{ij}(-{}^l\!P^{i}_{(l+1)j}+P^{i}_{(l+1)j})+\sum_{j=2}^{l}a^{m}_{ij}(-{}^{(j-1)}\!P^{i}_{j(l+1)}+ P^{i}_{j(l+1)})=\theta^{m}_{l+1}$$
and so 
$$\sum_{j\geq l+1}a^{m}_{ij}(-{}^l\!P^{i}_{(l+1)j}+P^{i}_{(l+1)j})
+\sum_{j=2}^{l}a^{m}_{ij}\tilde P^{i}_{j(l+1)}=\theta^{m}_{l+1},$$
therefore we  choose $(\tilde P^{i}_{(l+1)j})_{j\geq l+1}:=(-{}^l\!P^{i}_{(l+1)j}+P^{i}_{(l+1)j})_{j\geq l+1}.$
The proof is complete for a  jet $(\theta^{m}_{l})$ satifying $(\theta^{m}_{1})=0$
\bigskip

- If $k$ is bigger than 2, we have a jet $(\theta^{m}_l)\in  \mathcal{B}^{0}=ker(\mathcal{B})=Im(\mathcal{A}^1)$ with $(\theta^{m}_l)=0$ for all $l<k$ and we want to show that this $1$-jet is the image by $\mathcal{A}^1$ of a $2$-jet $(P^{i})_{lj}$ with $P^{i}_{lj}=0$ for all $1\leq l\ \hbox{and}\ j < k$. We proceed by induction on $k$: the induction hypothesis implies that the restrictions of the endomorphisms, $\mathcal{A}^{1}$ and $\mathcal{B}$, to the plane generated by the variables $x_{k-1},\cdots, x_{n}$ define an exact sequence. Furthermore the restriction operator $A$ to the plane $x_{k-1},\cdots, x_{n}$ is still in involution: the involution property is stable by restriction on plane  generated by $x_{k-1}\cdots x_n$; it is a well known fact (see for example the characterization due to Matsushima of involution in \cite{M} and \cite{BCGGG} pages 119 and 120) but it is a nice exercise to see this with the help of the proposition 4.4. Therefore all the assumptions needed are satisfied to apply the previous case for $k=2$ to the endomorphisms restricted to the plane $x_{k-1},\cdots, x_{n}$.

\end{proof}
 
 In the following, we construct the complex associated to a linear operator differential of order one, $A_0$ with tableau in involution, using the previous result. Let $\mathcal{A}_0=a^{m}_{ij}{\partial P^{i}\over \partial x_j}$ with $1\leq m\leq \alpha ,\  1\leq i\leq \beta ,\ 1\leq j\leq n$. We suppose that the endomorphism induced by $\mathcal{A}_0$ between the space  $S^{\beta}_{1n}$ and $S^{\alpha}_{0n}$ is surjective. The torsion $\mathcal{A}_1$, which is only a representative of the class of equations which define $Im(\mathcal{A}_{1})$ in $S^{\alpha}_{1n}$ with minimal number, define a differential operator of order one denoted by $\mathcal{A}_1$ too. Similary by induction, we define $\mathcal{A}_i$ operators of order one for all $i\in \N$. By the previous result, all the $\mathcal{A}_i$ are in involution and by the Cartan-Kahler theorem we have the exact sequence $S$ (possibly infinite):
 $$\big(C^{w}_{x_0}(\R^n)\big)^{\beta }\overset{\mathcal{A}_0}{\rightarrow}\big(C^{w}_{x_0}(\R^n)\big)^{\alpha}\overset{\mathcal{A}_1}{\rightarrow}\big(C^{w}_{x_0}(\R^n)\big)^{\alpha_1}\cdots \overset{\mathcal{A}_i}{\rightarrow}\cdots $$
 where $\big(C^{w}_{x_0}(\R^n)\big)^{\alpha}$ is an $\alpha$-vector with entries germs in $x_0$ of real analytic functions on $\R^n$.
 \begin{prop}The exact sequence $S$ is finite.
\end{prop}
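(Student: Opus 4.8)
The plan is to pass from the operators to their symbols, recognize the resulting object as the minimal free resolution of a finitely generated module over a polynomial ring in $n$ variables, and then quote Hilbert's syzygy theorem. Let $P=\R[\xi_1,\dots,\xi_n]$, graded in the usual way, with irrelevant maximal ideal $\mathfrak m=(\xi_1,\dots,\xi_n)$. Since each $\mathcal A_i$ is a constant-coefficient operator of order one, its symbol $\sigma_i(\xi)$ is a matrix of homogeneous linear forms in $\xi$, which I read (after a suitable shift of grading) as a degree-zero morphism of finitely generated free graded $P$-modules. Because $\mathcal A_{i+1}$ is, by construction, the torsion (the complete system of compatibility conditions) of $\mathcal A_i$, one has $\mathcal A_{i+1}\circ\mathcal A_i=0$, hence $\sigma_{i+1}\sigma_i=0$, and the symbols assemble into a complex of finitely generated free $P$-modules
$$P^{\beta}\xrightarrow{\ \sigma_0\ }P^{\alpha}\xrightarrow{\ \sigma_1\ }P^{\alpha_1}\xrightarrow{\ \sigma_2\ }\cdots .$$
The crucial point is that every entry of every $\sigma_i$ is linear: this forces each differential to land in $\mathfrak m$ times its target, i.e.\ the complex is \emph{minimal}.

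The second step is to show that this complex is exact, so that it is the minimal free resolution of a finitely generated $P$-module $N$ attached to $A_0$ (concretely $N=\operatorname{coker}({}^{t}\!\sigma_0)$, up to the usual transposition). Here I would invoke Proposition 4.8: all the $\mathcal A_i$ are in involution, and, by the description of the torsion recalled at the beginning of Section 4, each jet-level sequence $S^{\alpha_{i-1}}_{2n}\xrightarrow{\mathcal A^{1}_{i-1}}S^{\alpha_i}_{1n}\xrightarrow{\mathcal A_i}S^{\alpha_{i+1}}_{0n}$ is exact; involutivity then propagates this exactness to the symbol sequences $P^{\alpha_{i-1}}\to P^{\alpha_i}\to P^{\alpha_{i+1}}$ in every degree. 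Equivalently, and perhaps more transparently, one may use the constant-coefficient fundamental principle of Ehrenpreis-Palamodov: the exactness of $S$ on germs of real analytic functions established above, specialized to polynomial --- hence formal --- solutions, is equivalent to the exactness of the dual $P$-module complex; alternatively one may run the Hilbert-Poincar\'e / dimension count of Section 2 together with Lemma 4.6 directly.

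With these two steps in hand the conclusion is immediate: the polynomial ring $P$ in $n$ indeterminates has global dimension $n$ (Hilbert's syzygy theorem), so the minimal free resolution of a finitely generated $P$-module has length at most $n$. A minimal complex of finitely generated free $P$-modules that is exact is, by uniqueness, the minimal free resolution of its relevant homology module; hence $\sigma_i=0$, equivalently $\alpha_i=0$, for all $i>n$. Therefore $\mathcal A_i=0$ for $i>n$ and the exact sequence $S$ is finite, of length at most $n$.

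I expect the main obstacle to be exactly the exactness of the symbol complex: upgrading the exactness of $S$ over analytic germs (or the exactness of the jet-level torsion sequences) to exactness of the associated complex of polynomial modules. This is precisely the point at which the involutivity of all the $\mathcal A_i$ from Proposition 4.8 is indispensable --- without it the symbol complex would carry homology, which is exactly the torsion phenomenon around which the whole paper is organized, and the syzygy argument would collapse. Making this step rigorous is a matter of bookkeeping with the Spencer-cohomology dimension counts already developed in Lemma 4.6 and Proposition 4.8 rather than a genuinely new ingredient; the minimality of the complex, and hence the uniform bound $n$, is then automatic from the operators being of order one.
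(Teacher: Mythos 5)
Your argument is essentially the paper's own proof: both pass to the (transposed) symbol matrices to obtain, via the constant-coefficient duality you call Ehrenpreis--Palamodov and the paper cites as Theorem A of Nacinovich, an exact complex of finitely generated free graded modules over the polynomial ring, observe that the linearity of all entries makes this complex minimal, and conclude finiteness from the Hilbert syzygy theorem. The only differences are cosmetic (you phrase the module as a cokernel where the paper uses a kernel, and you sketch an optional alternative route to exactness through involutivity), so this is the same proof.
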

\begin{rem}Although all the previous facts are elementary, we do not have an elementary proof of this fact. The involution of the operator $\mathcal{A}_i$ implies subtle combinatory properties on the dimension of the tableau associated to $\mathcal{A}_{i}$ which we are not able to treat with simple arguments.

With the help of theorem A in \cite{N}, the above complex is exact on $\mathcal{C}^{\infty}(\Omega)$ with $\Omega$ convex.
\end{rem} 
\begin{proof}By classical results (see for example, \cite{N} theorem A), the previous exact sequence give the exact sequence below:
 $$(\C[X])^{\beta}\overset{\mathcal {}^t\!{A}_0(X)}{\longleftarrow}(\C[X])^{\alpha}\overset{\mathcal {}^t\!{A}_1(X)}{\longleftarrow}(\C[X])^{\alpha_1}\cdots \overset{\mathcal {}^t\!{A}_i(X)}{\longleftarrow}\cdots,$$
 where $\mathcal{A}_i(X)$ is the matrix symbol associated to $\mathcal{A}_i$. This exact sequence give a resolution of finitely generated graded $\C[X]$-module defined by the kernel $\mathcal {}^t\!{A}_0(X)$. The Hilbert Syzygy theorem give a unique finite free resolution of length $l\leq n+1$ up to complexes isomorphism (see \cite{Ei} for the classical facts on Hilbert Syzygy theorem). The matrix $\mathcal {}^t\!{A}_i$ contains only polynomials of degree one, so the above resolution is minimal and finite by Hilbert Syzygy theorem.
 \end{proof}
\begin{rem}The Dolbeault complex is relevable of the previous construction: the Cauchy-Riemann equations are in involution in sense of Cartan (See \cite{BCGGG} pp 155-156). The Cauchy-Fueter complex is particulary interesting because the Cauchy-Fueter equations do not have tableau in involution. So we cannot apply the above proposition and indeed the complex contains an operator of order 2. We are going to develop this example in the next section.
\end{rem}

\section{The Cauchy-Fueter complex}

We begin with the simplest but illuminating example of PDE system with a tableau which is not in involution and so it cannot be treated as before: the Cauchy-Fueter equations in $\R^8$. Using the coordinates $z^{i0},z^{i1}$ as in \cite{WW2}, section 2 and 3 give two operators, $tor_0$ and $tor_1$ (remenber $tor_0$ is a PDE system of order $2$ not of order $1$), such that the following  sequence is exact:
$$(\mathcal{C}^{w}(\R^8))^{2}\overset{CF}{\rightarrow} (\mathcal{C}^{w}(\R^8))^{4}\overset{tor_0}{\rightarrow} \lambda(\R^8,\Lambda^3(\C^4))\overset{tor_1}{\rightarrow} \lambda(\R^8,\C^2\otimes\Lambda^4(\C^4))$$
where $\mathcal{C}^{w}(\R^8)$ are the germs of real analytic functions with values in $\C$, $\lambda(\R^8,\Lambda^3(\C^4))$ are the $3$-forms in $\C^4$ with coefficients in $\mathcal{C}^{w}(\R^8)$ and $\lambda(\R^8,\C^2\otimes\Lambda^4(\C^4))$ are the $2$-vectors with entries $4$-forms with coefficients in $\mathcal{C}^{w}(\R^8)$. Clearly this exact sequence induced an exact sequence of endomorphisms between spaces of jets:
$$S^{2}_{(k+4)8}\overset{CF^{k+3}}{\rightarrow} S^{4}_{(k+3)8}\overset{tor_0^{k+1}}{\rightarrow}S^{4}_{(k+1)8} \overset{tor_1^{k}}{\rightarrow}S^{2}_{k8}.$$ Now using the rank theorem, it is obvious to see that 
$$dim(Im (tor_1^k))=dim (S^{4}_{(k+1)8})-dim (S^{4}_{(k+3)8})+dim (S^{2}_{(k+4)8})-dim (ker(CF^{k+3})),$$ where $dim (ker(CF^{k+3}))$ is nothing else than the dimension of the tableau of order $(k+3)$ associated to the Cauchy-Fueter equations (see section 2). Now using $(39)$ in section $2$ with $m=n=t=4$, we have $dim (CF^{k+3})=4C^{4}_{7+k}+2C^{3}_{k+7}$. On the other hand, $dim(S^{p}_{kn})=pC^{n-1}_{k+n-1}$ and therefore the difference 
$dim(Im (tor_1^k))-dim(S^{2}_{k8})$ is a polynomial of degree $7$ in $k$. Moreover we can prove after elementary calculus that this polynomial is zero for $k=0, 1,\cdots, 6, 7$, therefore this polynomial is $0$ which gives the Cauchy-Fueter complex in $\R^{8}$:
$$(\mathcal{C}^{w}(\R^8))^{2}\overset{CF}{\rightarrow} (\mathcal{C}^{w}(\R^8))^{4}\overset{tor_0}{\rightarrow} \lambda(\R^8,\Lambda^3(\C^4))\overset{tor_1}{\rightarrow} \lambda(\R^8,\C^2\otimes\Lambda^4(\C^4))\rightarrow 0.$$
In $\R^{4n}$ the complex is longer and we need some technical lemmas to construct the torsion of $tor_1$ and so on... Nevertheless, we can do it in the same spirit of the section $3$ but the calculus are tedious and there is no additional ideas, so we do not included the proof in this paper. 

\end{document}